\documentclass[11pt,a4]{amsart}
\usepackage{amssymb,amsfonts,amsmath,amsthm,mathabx}
\usepackage{mathrsfs}
\usepackage{mathtools}
\usepackage{color}
\usepackage{ esint }
\usepackage{float} 
\usepackage{comment}
%插入引用：设置里面默认文献工具记得改成Biber
\usepackage{biblatex}
\bibliography{reference}

%页面布局
\usepackage[a4paper]{geometry}

%行距设置
\usepackage{setspace}

%段距设置
%\setlength{\parskip}{0.1cm}

%超链接
\usepackage{hyperref}
\hypersetup{colorlinks=true,allcolors=black,pdfstartview=Fit,breaklinks=true}

%格式字体:definition
\theoremstyle{definition}
\newtheorem{defn}{Definition}[section]
\newtheorem{rmk}{Remark}

%格式字体:conjecture;corollary;lemma...
\theoremstyle{plain}

\newtheorem{prop}[defn]{Proposition}
\newtheorem{thm}[defn]{Theorem}

%创建格式字体:example
\newtheoremstyle{exp}% name of the style to be used
{\topsep}
{\topsep}
{\normalfont}
{0pt}% measure of space to indent
{}% name of head font
{.}% punctuation between head and body
{ }% space after theorem head; " " = normal interword space
{\thmname{#1}\thmnumber{ #2}\textnormal{\thmnote{ (#3)}}}

%格式字体:example
\theoremstyle{exp}

%定理从编号改为命名
\newtheoremstyle{named}{}{}{\itshape}{}{\bfseries}{.}{.5em}{\thmnote{#3}#1}
\theoremstyle{named}
\newtheorem*{namedtheorem}{}%此处最后一个花括号里面写定理的名字

%简化记号:数集、常用集合、常用函数
\newcommand{\C}{\mathbb{C}}
\newcommand{\R}{\mathbb{R}}

\newcommand{\N}{\mathbb{N}}

\newcommand{\supp}{\mathrm{supp}}
\DeclareMathOperator{\sign}{sign}%符号函数
%恒等算子
%方程按节标号
\numberwithin{equation}{section}

%自定义编号
\usepackage{enumerate}

%公式中符号加粗
\usepackage{bm}

%插入图片
\usepackage{graphicx}

%控制行距
%\onehalfspacing
\usepackage{microtype}
\begin{document}
	\title{fractional integral on Hardy spaces on product domains}
	\author{Yiyu Tang}
	\address{Faculty of Mathematics and Computer Science, Nicolaus Copernicus University, Chopin street 12/18, 87-100 Toruń, Poland}
	\email{ytang@mat.umk.pl}
	
	\subjclass[]{}
	
	\maketitle
	\begin{abstract}
		Using the vector-valued theory of singular integrals, we prove a Hardy--Littlewood--Sobolev inequality on product Hardy spaces $H^p_{\rm{prod}}$, which is a parallel result of the classical Hardy--Littlewood--Sobolev inequality. The same technique shows the $H^p_{\rm{prod}}$-boundedness of the iterated Hilbert transform. As a byproduct, shorter proofs of several recently discovered Hardy type inequalities on product Hardy spaces are obtained.
	\end{abstract}	%\setcounter{tocdepth}{1}%目录只显示section，不显示subsection
	%\tableofcontents

\section{Introduction}
\subsection{The Hardy--Littlewood--Sobolev inequality}
	Let $0<\alpha<d$, define the \textit{fractional integral} of a function $f$ by
	\begin{equation*}
		I_{\alpha}f(x)\coloneqq
		\int_{\R^d}\frac{f(x-y)}{|y|^{d-\alpha}}\,\mathrm{d}y.
	\end{equation*}
	The classical Hardy--Littlewood--Sobolev inequality (\cite{Stein_Singular_integrals}, Chapter~\RN{5}, Theorem~1) says that
	\begin{equation*}
		\|I_{\alpha}f\|_{L^q}\lesssim\|f\|_{L^p},\quad\frac{1}{q}=\frac{1}{p}-\frac{\alpha}{d},
	\end{equation*}
	as long as $1<p<q<\infty$.

 It is known that many results of $L^p$ spaces (for example, the singular integral theory) have counterpart results in Hardy spaces $H^p$, and the fractional integral is not an exception.
There are several equivalent definitions of $H^p$, maybe the most straightforward one is from the Poisson maximal function (undefined notations can be found in Section~\ref{Section: Notations})
 \begin{equation}\label{Definition of ordinary Hardy spaces}
 	\begin{aligned}
 		H^p(\R^d)
 		\coloneqq
 		\Big\{f\in\mathcal{S}^\prime(\R^d):\sup_{\delta>0}\big|f\ast P_\delta (x)\big|\in L^p(\R^d)\Big\}.
 	\end{aligned}
 \end{equation}

For $0<p\leq1$, if we assume that $f$ has higher moment cancellation, say $\int_{\R^d}x^\gamma f(x)\,\mathrm{d}x=0$ for all $|\gamma|\leq d(p^{-1}-1)$, then $I_\alpha(f)$ can be analytically
 continued to $0<\alpha\leq d/p$. An $H^p\mapsto H^q$ version of the Hardy--Littlewood--Sobolev inequality says that
	\begin{equation}\label{Hardy--Littlewood--Sobolev inequality in ordinary H^p spaces}
		\|I_{\alpha}f\|_{H^q}\lesssim\|f\|_{H^p},\quad\frac{1}{q}=\frac{1}{p}-\frac{\alpha}{d},
	\end{equation}
	as long as $0<p<q<\infty$. This is proved by Krantz in 1982 (\cite{Krantz}, Corollary~2.3) using atomic decomposition of $H^p$.
\subsection{Hardy space on product domain}
 In this paper, our main focus is on the product Hardy spaces $H^p_{\mathrm{prod}}(\R^d)$, which are defined by
	\begin{equation}\label{Definition of product Hardy spaces}
		\begin{aligned}
			H^p_{\mathrm{prod}}(\R^d)
			\coloneqq
			\Big\{f\in\mathcal{S}^\prime(\R^d):\sup_{\delta_1,\ldots,\delta_d>0}\big|f\ast (\otimes_{i=1}^dP_{\delta_i}) (x)\big|\in L^p(\R^d)\Big\}.
		\end{aligned}
	\end{equation}
We want to know if there exists a Hardy--Littlewood--Sobolev inequalty on $H^p_{\mathrm{prod}}(\R^d)$. In the definition \eqref{Definition of ordinary Hardy spaces}, there is only one parameter $\delta>0$, so the original operator $I_\alpha$ requires certain modification to coherent with the multi-parameter nature of $H^p_{\mathrm{prod}}(\R^d)$. A nature candidate is the following operator:
	\begin{equation}\label{Definition of the product form of the fractional integral operator}
	I_{(\alpha,d)}f(x)\coloneqq
	\int_{\R^d}\frac{f(x-y)}{|y_1y_2\cdots y_d|^{1-\frac{\alpha}{d}}}\,\mathrm{d}y.
\end{equation}
We call it the \textit{product form of the fractional integral}. This choice is not artificial, when $1<p,q<\infty$, weighted $L^p\mapsto L^q$ inequalities of $I_{(\alpha,d)}$ have been studied systematically by Sawyer and Wang\footnote{Strictly speaking, they studied the fractional integral on product domain $\R^m\times\R^n$, which corresponds to $I_{\alpha,2}$ in spirit.} in \cite{Sawyer_Wang} and \cite{Sawyer_Wang_2}. Now one may want to proceed Krantz's proof to obtain  $H^p_{\rm{prod}}\mapsto H^q_{\rm{prod}}$ inequalities of $I_{(\alpha,d)}$. However, there are some difficulties, which we shall now briefly describe.

\subsubsection*{Differences between $H^p$ and $H^p_{\rm{prod}}$}
One way to describe the differences between $H^p$ and $H^p_{\rm{prod}}$ is from the atomic decomposition. An $H^p(\R^d)$-atom is a function $a$ satisfies:
\begin{itemize}
	\item There exists a ball $B\in\R^d$, so that $\supp(a)\subset B$, and $\|a\|_\infty\leq|B|^{-1/p}$.
	\item The integral $\int x^\gamma a(x)=0$, for all multi-indices $\gamma$ with $|\gamma|\leq [\frac{d}{p}-d]$.
\end{itemize}
The \textit{atomic decomposition} of $H^p(\R^d)$ states that
\begin{equation*}
	\|f\|_{H^p(\R^d)}
	\approx
	\inf\Big\{
	\Big(\sum_{j=1}^\infty|\lambda_j|^p\Big)^{1/p}:
	\lim_{N\rightarrow\infty}
	\|\sum_{j=1}^N\lambda_ja_j-f\|_{H^p}
	=0,
	\text{ where }
	a_j
	\text{ are }
	H^p
	\text{-atoms.}
	\Big\}.
\end{equation*}
The $H^p_{\rm{prod}}$-atoms are far more complicated. We only describe the case $d=2$, which is due Chang and Fefferman (see \cite{Chang_Feerman_Annals_Paper}, and the survey \cite{Chang_Feerman_BAMS_survey}). An $H^p_{\rm{prod}}(\R^2)$-atom is a function $a$ satisfies
\begin{itemize}
	\item  The function $a$ is supported in an open set $\Omega$ of finite measure, and $\|a\|_2^2
	\leq
	|\Omega|^{1-\frac{2}{p}}$.
	\item The function $a$ can be further decomposed as $\sum_{R\in m(\Omega)}a_R$, where $m(\Omega)$ are maximal dyadic rectangles of $\Omega$, so that
	\begin{itemize}
		\item The support of  $a_R$ is a subset of $3R$, and $\sum_{R\in m(\Omega)}\|a_R\|_2^2
		\leq
		|\Omega|^{1-\frac{2}{p}}$.
		\item The integrals $\int_{\R}a_R(x,y)x^\alpha\,\mathrm{d}x=0$, for all $\alpha\leq[\frac{1}{p}-1]$ and all $y\in\R$.
		\item The integrals $\int_{\R}a_R(x,y)y^\beta\,\mathrm{d}y=0$, for all $\beta\leq[\frac{1}{p}-1]$ and all $x\in\R$.

	\end{itemize}
\end{itemize}
The \textit{atomic decomposition} of $H^p_{\rm{prod}}(\R^d)$ states that
\begin{equation*}
	\|f\|_{H^p_{\rm{prod}}}
	\approx
	\inf\Big\{
	\Big(\sum_{j=1}^\infty|\lambda_j|^p\Big)^{1/p}:
	\lim_{N\rightarrow\infty}
	\|\sum_{j=1}^N\lambda_ja_j-f\|_{H^p_{\rm{prod}}}
	=0,
	\text{ where }
	a_j
	\text{ are }
	H^p_{\rm{prod}}
	\text{-atoms.}
	\Big\}.
\end{equation*}

In Krantz's proof of \eqref{Hardy--Littlewood--Sobolev inequality in ordinary H^p spaces}, the following geometric property is used:\textit{ Let $B$ be a Euclidean ball centered at $0$, if $x\in B$ and $y\notin 2B$, then $|x-y|\approx |y|$}. The geometry of a general open set is far more complicated than balls, so no such property is available, and a step-by-step repetition of Krantz's proof does not work.

This is not the only difficulty. Krantz's proof is essentially the same as that of the boundedness of singular integral operator (i.e., the Calder\'on--Zygmund theory) on $H^p$ spaces, which has been well established. However, when move to the product settings, an example of Journ\'e (see \cite{Journe_Multiparameter_case}) reveals that the Calder\'on--Zygmund theory on $H^p_{\rm{prod}}(\R^2)$ and $H^p_{\rm{prod}}(\R^3)$ are different.

Nevertheless, there are still ways to circumvent these difficulties. In this paper, we show that, if we examine $H^p_{\rm{prod}}$ from the vector-valued point of view, then the $H^p_{\rm{prod}}\mapsto H^q_{\rm{prod}}$ boundedness can be easily established. Moreover, we do not need to resort deep results of $H^p_{\rm{prod}}$.
\begin{thm}\label{Hardy--Littlewood--Sobolev inequality in product H^p spaces}
	Let $I_{(\alpha,d)}$ be the product form of the fractional integral operator.  We have the Hardy--Littlewood--Sobolev inequality in product form:
	\begin{equation*}
		\|I_{(\alpha,d)}f\|_{H_{\mathrm{prod}}^q}\lesssim\|f\|_{H_{\mathrm{prod}}^p},\quad\frac{1}{q}=\frac{1}{p}-\frac{\alpha}{d},
	\end{equation*}
	as long as $0<p<q<\infty$.
\end{thm}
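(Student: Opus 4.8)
The plan is to exploit the tensor-product structure of the operator together with a vector-valued description of the product Hardy space, reducing everything to the one-dimensional inequality \eqref{Hardy--Littlewood--Sobolev inequality in ordinary H^p spaces} (the case $d=1$, due to Krantz). First I would record the factorization
\[
I_{(\alpha,d)} = I^{(1)}_{\beta}\circ I^{(2)}_{\beta}\circ\cdots\circ I^{(d)}_{\beta},\qquad \beta=\tfrac{\alpha}{d},
\]
where $I^{(i)}_{\beta}g(x)=\int_{\R}|s|^{\beta-1}g(x-se_i)\,\mathrm{d}s$ is the one-dimensional fractional integral acting in the $i$-th variable ($e_i$ the $i$-th coordinate vector). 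These factors commute with one another, and each commutes with the product Poisson convolution, since all of them are convolution operators.

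The backbone is the vector-valued viewpoint: writing $\R^d=\R_{x_1}\times\R^{d-1}_{x'}$, I would use the identification, with equivalent (quasi-)norms,
\[
H^p_{\mathrm{prod}}(\R^d)\cong H^p\big(\R_{x_1};H^p_{\mathrm{prod}}(\R^{d-1})\big),
\]
the Hardy space of $H^p_{\mathrm{prod}}(\R^{d-1})$-valued functions on the line, defined through the value-norm of the one-parameter Poisson maximal function in $x_1$. This equivalence is exactly what it means to \emph{examine $H^p_{\mathrm{prod}}$ from the vector-valued point of view}; establishing (or citing) it is the first structural input. Granting it, I would argue by induction on $d$. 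In the remaining $d-1$ variables the operator restricts to $I_{(\alpha',d-1)}$ with $\alpha'/(d-1)=\alpha/d=\beta$, so $I_{(\alpha,d)}=I^{(1)}_\beta\circ I_{(\alpha',d-1)}$, and then
\[
\|I_{(\alpha,d)}f\|_{H^q(\R_{x_1};H^q_{\mathrm{prod}}(\R^{d-1}))}
\lesssim
\|I_{(\alpha',d-1)}f\|_{H^p(\R_{x_1};H^q_{\mathrm{prod}}(\R^{d-1}))}
\lesssim
\|f\|_{H^p(\R_{x_1};H^p_{\mathrm{prod}}(\R^{d-1}))}.
\]
The first inequality is a \emph{vector-valued} form of \eqref{Hardy--Littlewood--Sobolev inequality in ordinary H^p spaces} applied in $x_1$ with values in the fixed space $X=H^q_{\mathrm{prod}}(\R^{d-1})$ and $1/q=1/p-\beta$; the second is the inductive hypothesis $I_{(\alpha',d-1)}\colon H^p_{\mathrm{prod}}(\R^{d-1})\to H^q_{\mathrm{prod}}(\R^{d-1})$ applied fibrewise in $x_1$, which upgrades to the vector-valued Hardy norm for free: a bounded operator $T$ between the value spaces satisfies $(Tu)\ast_1 P_{\delta_1}=T(u\ast_1 P_{\delta_1})$, so the $x_1$-Poisson maximal function of $Tu$ is controlled by that of $u$. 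Because $1/q=1/p-\beta=1/p-\alpha/d$ is the same pair $(p,q)$ at every level, the exponents close up.

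Everything therefore reduces to the heart of the matter: the vector-valued one-dimensional inequality $\|I_\beta F\|_{H^q(\R;X)}\lesssim\|F\|_{H^p(\R;X)}$ for $X=H^q_{\mathrm{prod}}(\R^{d-1})$. I would prove it by revisiting Krantz's atomic/molecular argument with $|\cdot|$ replaced by $\|\cdot\|_X$: an $X$-valued $H^p$-atom on an interval $I$, after applying $I_\beta$ and using the vanishing moments $\int t^k a(t)\,\mathrm{d}t=0$ to subtract the Taylor polynomial of the kernel, becomes an $X$-valued $H^q$-molecule with the expected decay, whence the $H^q(\R;X)$ bound follows by summing.

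\textbf{The main obstacle} lies precisely here, and it is twofold. First, the cancellation of the atom is indispensable: the naive pointwise bound $\sup_\delta|I_\beta f\ast P_\delta|\le I_\beta(\sup_\delta|f\ast P_\delta|)$, valid by positivity of the kernel, reduces the claim to the \emph{scalar} Hardy--Littlewood--Sobolev inequality applied to the maximal function, and this \emph{fails} for $p\le1$; one cannot bypass the moment conditions built into the atoms (consistently with the fact that $I_{(\alpha,d)}$ itself only makes sense on cancelling data). Second, when $q\le1$ the value space $X=H^q_{\mathrm{prod}}(\R^{d-1})$ is merely quasi-Banach, so the continuous Minkowski inequality $\|\int(\cdots)\|_X\le\int\|\cdots\|_X$ underlying the scalar molecular estimates is unavailable. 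This is exactly where the vector-valued theory of singular integrals must be invoked: one works with the discrete $q$-triangle inequality $\|\sum_j x_j\|_X^q\le\sum_j\|x_j\|_X^q$ on the atomic pieces rather than with an integral triangle inequality, and controls each piece through the concrete structure of $H^q_{\mathrm{prod}}$. Once the vector-valued identification of $H^p_{\mathrm{prod}}(\R^d)$ and the vector-valued one-dimensional estimate are in hand, the induction and the remaining estimates are routine.
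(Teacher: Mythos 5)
Your overall philosophy --- factor $I_{(\alpha,d)}$ into one-dimensional fractional integrals and run a vector-valued one-dimensional Hardy--Littlewood--Sobolev inequality one variable at a time --- is the same as the paper's, but your choice of value space creates two genuine gaps that the paper's proof is specifically engineered to avoid. First, the backbone identification $H^p_{\mathrm{prod}}(\R^d)\cong H^p\big(\R;H^p_{\mathrm{prod}}(\R^{d-1})\big)$ is neither proved nor referenced, and in the range $p\le1$ it is the crux rather than a formality: the direction your chain actually requires, namely $\|g\|_{H^q_{\mathrm{prod}}(\R^d)}\lesssim\|g\|_{H^q(\R;\,H^q_{\mathrm{prod}}(\R^{d-1}))}$, asks the iterated maximal function (with $\sup_{\delta_1}$ pulled \emph{outside} the $L^q(\mathrm{d}x')$ integral) to dominate the joint maximal function, whereas the elementary inequality $\sup_{\delta}\int\le\int\sup_{\delta}$ only gives the opposite comparison. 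Moreover, for $p\le1$ the value space $H^p_{\mathrm{prod}}(\R^{d-1})$ is merely quasi-Banach, so even the meaning of $H^p(\R;X)$ (tempered $X$-valued distributions, atoms, maximal characterizations) falls outside the Banach-valued framework of Blasco--Xu that the paper leans on. Second, your ``heart of the matter'', $\|I_\beta F\|_{H^q(\R;X)}\lesssim\|F\|_{H^p(\R;X)}$ with $X=H^q_{\mathrm{prod}}(\R^{d-1})$, cannot be obtained by replacing $|\cdot|$ with $\|\cdot\|_X$ in Krantz's argument: the molecular estimate rests on $\big\|\int\big(K(t-s)-P(t-s)\big)a(s)\,\mathrm{d}s\big\|_X\le\int\|\cdots\|_X\,\mathrm{d}s$, a continuous Minkowski inequality that fails for quasi-norms. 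You correctly name this obstacle, but the proposed substitute (the discrete $q$-triangle inequality ``on the atomic pieces'') does not apply to the continuum integral defining $I_\beta a$, so the estimate is not actually recovered.

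The paper sidesteps both problems by never leaving the Hilbert-valued world. It characterizes $H^q_{\mathrm{prod}}$ by the product Lusin area integral $S$, observes that for fixed $x$ the inner cone integral is the norm of an $X$-valued function of $y$ with $X=L^2(\Gamma_1,\rho_1^{-2}\,\mathrm{d}s\,\mathrm{d}\rho_1)$ a Hilbert space, applies the one-dimensional vector-valued Hardy--Littlewood--Sobolev inequality there (where Krantz's proof does go through verbatim), interchanges the $x$- and $y$-integrations by Minkowski's integral inequality with exponent $q/p>1$, and then repeats the argument in the other variable with $Y=L^2(\Gamma_2,\rho_2^{-2}\,\mathrm{d}t\,\mathrm{d}\rho_2)$; the known $S$-function characterization of $H^p_{\mathrm{prod}}$ plays the role you assign to the iterated identification. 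To salvage your induction you would have to either prove the iterated identification and develop a Hardy--Littlewood--Sobolev theory for quasi-Banach-valued Hardy spaces from scratch, or switch the value spaces to the $L^2$ cone spaces as the paper does.
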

Also, based on the same technique, we obtain shorter (both conceptually and technically)  proofs of the main results of a recent paper \cite{Dyachenko--Nursultanov--Tikhonov--Weisz} by Dyachenko, Nursultanov, Tikhonov and Weisz. An example is the $H^p_{\rm{prod}}$ Hardy--Littlewood inequality 
\begin{equation}\label{Product Hardy--Littlewood inequality}
		\bigg(\int_{\R^d}\frac{|\widehat{f}(\xi)|^p}{|\xi_1\xi_2\cdots\xi_d|^{2-p}}\,\mathrm{d}\xi\bigg)^{\frac{1}{p}}
		\lesssim_{d,p}
		\|f\|_{H^p_{\mathrm{prod}}(\R^d)},\text{ where }0<p\leq 1.
\end{equation}
This is a parallel result of the classical $H^p$ Hardy--Littlewood inequality
\begin{equation}\label{Classical Hardy--Littlewood inequality}
	\bigg(\int_{\R^d}\frac{|\widehat{f}(\xi)|^p}{|\xi|^{(2-p)d}}\,\mathrm{d}\xi\bigg)^{\frac{1}{p}}
	\lesssim_{d,p}
	\|f\|_{H^p(\R^d)},\text{ where }0<p\leq 1.
\end{equation}
The inequality \eqref{Classical Hardy--Littlewood inequality} is well-known, and can be found in \cite{Stein_Harmonic_Analysis}, Chapter \RN{3}, Section 5.4, Statement (d). The inequality \eqref{Product Hardy--Littlewood inequality} is less known. Using $K$-functional and interpolation, Jawerth and Torchinsky gave a sketchy proof of \eqref{Product Hardy--Littlewood inequality} for $d=2$ (see \cite{Jawerth--Torchinsky}, Proposition C). However, according to \cite{Dyachenko--Nursultanov--Tikhonov--Weisz}, Jawerth--Torchinsky's proof has some gaps, and the general case $d\geq2$ of \eqref{Product Hardy--Littlewood inequality} is proved in \cite{Dyachenko--Nursultanov--Tikhonov--Weisz},  Theorem 4, by using singular integral theory on $H^p_{\rm{prod}}$, which requires a considerable preparatory work. We will show that the inequality \eqref{Product Hardy--Littlewood inequality} can be proved only by iterated use of the Fubini's theorem and the 1-dimensional theory, hece the application of the Calder\'on--Zygmund theory on $H^p_{\rm{prod}}$ can be avoided.

\section{Notations}\label{Section: Notations}
Some standard notations in analysis (like $L^p$ spaces, Schwartz class $\mathcal{S}$, and Hilbert transform $H$, etc) will not be repeated here. Below $X$ is a Banach space.
	\begin{itemize}
	\item $H^p(\R^d;X)$: the vector-valued Hardy spaces. Several equivalent definitions will be described subsequently. We also use $H^p,\,H^p(\R^d)$ when $X=\R$ or $\C$.
	
	\item $H^p_{\mathrm{prod}}(\R^d;X)$: the vector-valued product Hardy spaces. We use the conventions $H^p_{\mathrm{prod}},\,H^p_{\mathrm{prod}}(\R^d)$ when $X=\R$ or $\C$.
	
	\item $\mathcal{F}_{i}(f)$: the partial Fourier transform of a function $f\colon \R^d \rightarrow \C$,
	\begin{equation*}
		\mathcal{F}_{i}(f)(\xi_i)
		\coloneqq
		\int_{\mathbb{R}}f(x)\mathrm{e}^{-2\pi\mathrm{i}x_i\cdot\xi_i}\mathrm{d}x_i.
	\end{equation*}
	\item $H_{i}(f)$: the directional Hilbert transform of a function 
	$f \colon \R^d \rightarrow \C$, defined by
	\begin{equation*}
		H_i(f)(x)\coloneqq	\frac{1}{\pi}\mathrm{ p.v.}\int_{\R}\frac{f(x_1,\ldots,y_i,\ldots,x_d)}{x_i-y_i}\,\mathrm{d}y_i.
	\end{equation*}
Equivalently, we have $H_i(f)=(-\mathrm{i}\sign(\xi_i)\widehat{f}(\xi))^{\widecheck{\,}}$.

\item  $P_t:$ the Poisson kernel on $\R^d$, defined by $\widehat{P_t\,}(\xi)\coloneqq\mathrm{e}^{-t|\xi|}$, where $t>0$
\item  $P_{t_1,\ldots,t_d}$ or $\otimes_{i=1}^dP_{\delta_i}$: the product Poisson kernel on $\R^d$, defined by $\prod_{i=1}^dP_{t_i}(x_i)$.

\item  $\mathcal{H}$: the  Hardy--Ces\`aro operator. For appropriate $f \colon \R^d \rightarrow \C$,
	\begin{equation*}
		\mathcal{H}(f)(x)
		\coloneqq
		\frac{1}{x_1x_2\cdots
			x_d}\int_{0}^{x_1}\int_{0}^{x_2}\ldots\int_{0}^{x_d}f(t_1,t_2,\ldots,t_d)\,\mathrm{d}t.
	\end{equation*}

\item $I_{(\alpha,d)}$: the product fractional integral operator, defined in \eqref{Definition of the product form of the fractional integral operator}.

\item $S(f)$ and $S_X(f)$: the Lusin area integral (or $S$-function) of a scalar-valued (or $X$-valued) function $f$, defined in Section~\ref{Section: Fractional integral on product spaces}.
\end{itemize}

\section{Fractional integral on product spaces}\label{Section: Fractional integral on product spaces}
In this section, we prove Theorem~\ref{Hardy--Littlewood--Sobolev inequality in product H^p spaces}. We only consider the case $d=2$ for simplicity. Although theories of $H^p_{\rm{prod}}(\R^2)$ and $H^p_{\rm{prod}}(\R^3)$ are essentially different, for our approach used in this paper, the dimension does not cause problems. Also, the 1-dimensional case is known, as there is no product theory when $d=1$.
	The method of the proof goes back at least as far as  \cite{Fefferman--Stein} (see the lemma on Page 139 there). Write 
	\begin{equation*}
		I_{(\alpha,2)}f(x,y)=f\ast_1K_1\ast_2K_2,
	\end{equation*}
	where 
	\begin{equation*}
		K_1(x)=\frac{1}{|x|^{1-\frac{\alpha}{2}}},\ 
		K_2(y)=\frac{1}{|y|^{1-\frac{\alpha}{2}}},
		\ 
		\text{and }x,y\in \R.
	\end{equation*}
The notation $\ast_1$ indicates that the convolution is taken with respect to $x$-variable, and $\ast_2$ is taken with respect to $y$-variable.
	
	Choose an even function $\psi\in C^\infty_c(\R)$ satisfies the following conditions:
	\begin{equation}\label{Defining function of Lusin area integral}
		\begin{aligned}
			\mathrm{supp}(\psi)\subset\{|x|\leq1\},\textrm{ and }\int x^\alpha\psi(x)\,\mathrm{d}x=0\text{ for all }0\leq\alpha\leq N,
		\end{aligned}
	\end{equation}
	where\footnote{Later we will use the $S$-function to characterize both $H^p_{\rm{prod}}$ and $H^q$. Recall that $p<q$, so to ensure that $\psi$ has a sufficiently high-order cancellation adapted to $H^p_{\rm{prod}}$,  we use $[1/p-1]$, rather than $[1/q-1]$.} $N\coloneqq [1/p-1]$.
	
	Define $\psi_{\rho}\coloneqq\frac{1}{\rho}\psi(\frac{\cdot}{\rho})$. The Lusin area integral (or $S$-function) of $	I_{(\alpha,2)}f$ is
	\begin{equation*}
		\begin{aligned}
			S(I_{(\alpha,2)}f)(x,y)
			&\coloneqq
			\bigg(\int_{\Gamma_2}\int_{\Gamma_1}
			\big|I_{(\alpha,2)}f\ast_1\psi_{\rho_1}\ast_2\psi_{\rho_2}\big|^2(x-s,y-t)
			\,\frac{\mathrm{d}s\mathrm{d}\rho_1}{\rho_1^2}\,\frac{\mathrm{d}t\mathrm{d}\rho_2}{\rho_2^2}\bigg)^{\frac{1}{2}},\\
		\end{aligned}
	\end{equation*}
	where $\Gamma_i$ is the cone $\Gamma_i\coloneqq\{(s,\rho_i)\in\R^2:|s|<\rho_i\}$.
	
	The square function characterization of $H^q_{\mathrm{prod}}$ says that $\|I_{(\alpha,2)}f\|_{H^q_{\mathrm{prod}}}
	\approx
	\|S(I_{(\alpha,2)}f)\|_{L^q}$. By definition of $I_{(\alpha,2)}$, we can rewrite the $S(I_{(\alpha,2)}f)$ as
	\begin{equation*}
		\begin{aligned}
			\bigg(\int_{\Gamma_2}\bigg(\int_{\Gamma_1}
			\bigg|\psi_{\rho_2}\ast_2K_2\ast_2\Big[(f\ast_1K_1\ast_1\psi_{\rho_1})(x-s,\cdot)\Big]\bigg|^2(y-t)
			\,
			\frac{\mathrm{d}s\mathrm{d}\rho_1}{\rho_1^2}\bigg)\,\frac{\mathrm{d}t\mathrm{d}\rho_2}{\rho_2^2}\bigg)^{\frac{1}{2}}.\\
		\end{aligned}
	\end{equation*}
	We define $F_{u,\rho_1}(y)
	\coloneqq
	[(f\ast_1K_1\ast_1\psi_{\rho_1})(\cdot,y)](u)$,
	and $X\coloneqq L^2(\Gamma_1,\rho_1^{-2}\,\mathrm{d}s\mathrm{d}\rho_1)$. 
	
	Now the integral $\int_{\Gamma_1}|\psi_{\rho_2}\ast_2K_2\ast_2(f\ast_1K_1\ast_1\psi_{\rho_1})(x-s,\cdot)|^2(y-t)\rho_1^{-2}\,\mathrm{d}s\mathrm{d}\rho_1$ can be written as
	\begin{equation*}
		\begin{aligned}
			\Big\|\psi_{\rho_2}\ast_2K_2\ast_2F_{x-s,\rho_1}\Big\|^2_{X}(y-t),
		\end{aligned}
	\end{equation*}
	and
	\begin{equation*}
		\begin{aligned}
			S(I_{(\alpha,2)}f)(x,y)
			=
			\bigg(\int_{\Gamma_2}\Big\|\psi_{\rho_2}\ast_2K_2\ast_2F_{x-s,\rho_1}\Big\|^2_{X}(y-t)\,\frac{\mathrm{d}t\mathrm{d}\rho_2}{\rho_2^2}\bigg)^{\frac{1}{2}}.\\
		\end{aligned}
	\end{equation*}
	Our goal is to calculate $\iint|S(I_{(\alpha,2)}f)|^q\,\mathrm{d}y\mathrm{d}x$, let us first calculate the $\int\mathrm{d}y$ integral:
	\begin{equation*}
		\bigg(\int_{\R}|S(I_{(\alpha,2)}f)|^q\,\mathrm{d}y\bigg)^\frac{1}{q}
		=
		\bigg(\int_{\R}	\bigg(\int_{\Gamma_2}\Big\|\psi_{\rho_2}\ast_2K_2\ast_2F_{x-s,\rho_1}\Big\|^2_{X}(y-t)\,\frac{\mathrm{d}t\mathrm{d}\rho_2}{\rho_2^2}\bigg)^{\frac{q}{2}}\,\mathrm{d}y\bigg)^\frac{1}{q}.
	\end{equation*}
	We will use the $S$-function characterization of $X$-valued Hardy spaces $H^q(\R;X)$. Let $g\colon \R\rightarrow X$ be a vector-valued function, its $S$-function is defined by
	\begin{equation*}
		S_X(g)(y)
		\coloneqq
		\bigg(\int_{\Gamma_2}\Big\|\psi_{\rho_2}\ast g\Big\|^2_{X}(y-t)\,\frac{\mathrm{d}t\mathrm{d}\rho_2}{\rho_2^2}\bigg)^{\frac{1}{2}}.
	\end{equation*}
	An equivalent definition of $H^q(\R;X)$ is
	\begin{equation*}
		H^q(\R;X)
		\coloneqq
		\{f\in\mathcal{S}^\prime(\R;X):S_X(f)\in L^q(\R,\mathrm{d}y)\}.
	\end{equation*}
	We do not give the definition of $\mathcal{S}^\prime(\R;X)$ here, which can be found in \cite{Hytonen_book}, Chapter~2, Section~2.4.
	We have
	\begin{equation*}
		\bigg(\int_{\R}|S(I_{(\alpha,2)}f)|^q\,\mathrm{d}y\bigg)^\frac{1}{q}	
		=
		\|S_X(K_2\ast_2F_{x-s,\rho_1})\|_{L^q(\R,\mathrm{d}y)}
	\end{equation*}
	hence $(\int_{\R}|S(I_{(\alpha,2)}f)|^q\,\mathrm{d}y)^\frac{1}{q}
	\approx
	\|K_2\ast_2F_{x-s,\rho_1}\|_{H^q(\R,\mathrm{d}y;X)}$.
	
	The Hardy--Littlewood--Sobolev inequality for $X$-valued function also holds (one just replaces the Euclidean norm $|\cdot|$ by $\|\cdot\|_X$ and proceeds Krantz's proof in \cite{Krantz} step by step\footnote{For this, we need to introduce $H^p(\R;X)$-atoms. The reader can find it on page 338 of \cite{Blasco--Xu}. When $X$ is a Hilbert space, most properties of $H^p(\R^d)$ can be extended to $H^p(\R^d;X)$, like the atomic decomposition, smooth maximal function characterization, etc. We will use these as a black box.}). So we can use the Hardy--Littlewood--Sobolev inequality for $K_2$ on $H^q(\R,\mathrm{d}y;X)$ and get $\|K_2\ast_2F_{x-s,\rho_1}\|_{H^q(\R,\mathrm{d}y;X)}
		\lesssim
		\|F_{x-s,\rho_1}\|_{H^p(\R,\mathrm{d}y;X)}$.

Again, we have
	$\|F_{x-s,\rho_1}\|_{H^p(\R,\mathrm{d}y;X)}
	=
	\|S_X(F_{x-s,\rho_1})\|_{L^p(\R,\mathrm{d}y)}$ by $S$-function characterization. 	Therefore,
	\begin{equation*}
		\begin{aligned}
			\bigg(\iint_{\R\times\R}|S(I_{(\alpha,2)}f)|^q\,\mathrm{d}y\mathrm{d}x\bigg)^{\frac{1}{q}}
			&\lesssim
			\bigg(\int_{\R}\|F_{x-s,\rho_1}\|^q_{H^p(\R,\mathrm{d}y;X)}\,\mathrm{d}x\bigg)^{\frac{1}{q}}\\
			&\lesssim
			\bigg(\int_{\R}	\bigg(\int_\R |S_X(F_{x-s,\rho_1})|^p(y)\,\mathrm{d}y\bigg)^{\frac{q}{p}}\,\mathrm{d}x\bigg)^{\frac{1}{q}}\\
			&\lesssim
			\bigg(\int_{\R}	\bigg(\int_\R |S_X(F_{x-s,\rho_1})|^q(y)\,\mathrm{d}x\bigg)^{\frac{p}{q}}\,\mathrm{d}y\bigg)^{\frac{1}{p}}.\\
		\end{aligned}
	\end{equation*}
	We used the Minkowski's integral inequality (notice that $q>p$) in the last line.
	
	The $S$-function $S_X(F_{x-s,\rho_1})(y)$ is
	\begin{equation*}
		\begin{aligned}
			\bigg(\int_{\Gamma_2}\big\|\psi_{\rho_2}\ast_2(F_{x-s,\rho_1})
			\big\|_X(y-t)\,\frac{\mathrm{d}t\mathrm{d}\rho_2}{\rho_2^2}\bigg)^{1/2}
			=\bigg(\int_{\Gamma_1}
			\Big\|\psi_{\rho_2}\ast_1K_1\ast_1G_{y-t,\rho_2}\Big\|_Y^2(x-s)
			\,\frac{\mathrm{d}s\mathrm{d}\rho_1}{\rho_1^2}\bigg)^{\frac{1}{2}},\\
		\end{aligned}
	\end{equation*}
	where the function $G$, the space $Y$ are defined by
	\begin{equation*}	
		\begin{aligned}
			G_{v,\rho_2}(x)
			\coloneqq
			(f\ast_2\psi_{\rho_2})(x,v)
			=
			\Big[(f\ast_2\psi_{\rho_2})(x,\cdot)\Big](v),\quad
			Y
			\coloneqq
			L^2\Big(\Gamma_2,\frac{\mathrm{d}t\mathrm{d}\rho_2}{\rho_2^2}\Big).
		\end{aligned}
	\end{equation*}
	Therefore,  by $S$-function characterization of $H^q(\R,\mathrm{d}x;Y)$, and the vector-valued Hardy--Littlewood--Sobolev inequality for $K_1$,
	\begin{equation*}
		\begin{aligned}
			\bigg(\int_\R \bigg(\int_{\Gamma_1}
			\Big\|\psi_{\rho_2}\ast_1K_1\ast_1G_{y-t,\rho_2}\Big\|_Y^2(x-s)
			\,\frac{\mathrm{d}s\mathrm{d}\rho_1}{\rho_1^2}\bigg)^{\frac{q}{2}}\,\mathrm{d}x\bigg)^{\frac{1}{q}}
			\lesssim
			\|G_{y-t,\rho_2}\|_{H^p(\R,\mathrm{d}x;Y)}.
		\end{aligned}
	\end{equation*}
	We have proved that
	\begin{equation*}
		\begin{aligned}
			\bigg(\iint_{\R\times\R}|S(I_{(\alpha,2)}f)|^q\,\mathrm{d}y\mathrm{d}x\bigg)^{\frac{1}{q}}
			&\lesssim
			\bigg(\int_{\R}	\bigg(\int_\R |S_X(F_{x-s,\rho_1})|^q(y)\,\mathrm{d}x\bigg)^{\frac{p}{q}}\,\mathrm{d}y\bigg)^{\frac{1}{p}}\\
			&\lesssim
			\bigg(\int_{\R}	\| G_{y-t,\rho_2}\|^p_{H^p(\R,\mathrm{d}x;Y)}\,\mathrm{d}y\bigg)^{\frac{1}{p}}.
		\end{aligned}
	\end{equation*}
	The $S$-function of $ G_{y-t,\rho_2}$ is
	\begin{equation*}
		S_Y(G_{y-t,\rho_2})(x)
		\coloneqq
		\bigg(\int_{\Gamma_1}
		\Big\|\psi_{\rho_2}\ast_1G_{y-t,\rho_2}\Big\|_Y^2(x-s)
		\,\frac{\mathrm{d}s\mathrm{d}\rho_1}{\rho_1^2}\bigg)^{\frac{1}{2}},
	\end{equation*}
	which is exactly the $S$-function of $f$:
	\begin{equation*}
		S(f)(x,y)
		=
		\bigg(\iint_{\Gamma_1\times\Gamma_2}
		\big|f\ast_1\psi_{\rho_1}\ast_2\psi_{\rho_2}\big|^2(x-s,y-t)
		\,\frac{\mathrm{d}s\mathrm{d}\rho_1}{\rho_1^2}\,\frac{\mathrm{d}t\mathrm{d}\rho_2}{\rho_2^2}\bigg)^{\frac{1}{2}}.\\
	\end{equation*}
	To conclude,
	\begin{equation*}
		\begin{aligned}
			\|I_{(\alpha,2)}f\|_{H^q_{\mathrm{prod}}}
			\approx
			\|S(I_{(\alpha,2)}f)\|_{L^q}
			\lesssim
			\|S(f)\|_{L^p}
			\approx
			\|f\|_{H^p_{\mathrm{prod}}}.
		\end{aligned}
	\end{equation*}
Using the same method, one can prove the following proposition.
\begin{prop}[Boundedness of the iterated Hilbert transform]\label{H^p to L^p Boundedness of the iterated Hilbert transform}
	Assume that $d\geq1$ and $0<p\leq1$. The iterated Hilbert transform is bounded from $H^p_{\rm{prod}}$ to itself:
	\begin{equation*}
		\|H_1H_2\ldots H_df\|_{H^p_{\rm{prod}}(\R^d)}\lesssim\|f\|_{H^p_{\rm{prod}}(\R^d)}.
	\end{equation*}
\end{prop}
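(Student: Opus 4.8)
The plan is to run the vector-valued peeling argument from the proof of Theorem~\ref{Hardy--Littlewood--Sobolev inequality in product H^p spaces} essentially verbatim, making only two substitutions: the fractional kernels $K_1,K_2$ are replaced by the one-dimensional Hilbert kernel $h(t)=\frac{1}{\pi}\,\mathrm{p.v.}\,\frac{1}{t}$, and the vector-valued Hardy--Littlewood--Sobolev inequality is replaced by the vector-valued $H^p\mapsto H^p$ boundedness of the Hilbert transform. As before I would treat only $d=2$. Because the one-dimensional convolutions commute, one has
\begin{equation*}
	H_1H_2f=f\ast_1 h\ast_2 h,
\end{equation*}
so $H_1H_2f$ has exactly the product-convolution structure of $I_{(\alpha,2)}f$, with $h$ playing the role of both $K_1$ and $K_2$.

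First I would invoke the $S$-function characterization $\|H_1H_2f\|_{H^p_{\mathrm{prod}}}\approx\|S(H_1H_2f)\|_{L^p}$ and perform the $\int\mathrm{d}y$ integration first, exactly as in the theorem. With $F_{u,\rho_1}(y)\coloneqq[(f\ast_1 h\ast_1\psi_{\rho_1})(\cdot,y)](u)$ and $X\coloneqq L^2(\Gamma_1,\rho_1^{-2}\,\mathrm{d}s\,\mathrm{d}\rho_1)$, the inner integral becomes $\|h\ast_2 F_{x-s,\rho_1}\|_{H^p(\R,\mathrm{d}y;X)}$ after applying the $S_X$-characterization of $H^p(\R;X)$. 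Here is the one substantive change: instead of the vector-valued Hardy--Littlewood--Sobolev inequality I would apply the vector-valued $H^p\mapsto H^p$ boundedness of the directional Hilbert transform,
\begin{equation*}
	\|h\ast_2 F_{x-s,\rho_1}\|_{H^p(\R,\mathrm{d}y;X)}\lesssim\|F_{x-s,\rho_1}\|_{H^p(\R,\mathrm{d}y;X)},
\end{equation*}
valid because $X$ is a Hilbert space. I would then peel off the $x$-variable identically, introducing $G_{v,\rho_2}(x)\coloneqq(f\ast_2\psi_{\rho_2})(x,v)$ and $Y\coloneqq L^2(\Gamma_2,\mathrm{d}t\,\mathrm{d}\rho_2/\rho_2^2)$, rewriting $S_X(F_{x-s,\rho_1})(y)$ as the $Y$-valued $S$-function of $h\ast_1 G_{y-t,\rho_2}$, and applying the vector-valued Hilbert bound once more in the $x$-direction. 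Recognizing $S_Y(G_{y-t,\rho_2})(x)=S(f)(x,y)$ then closes the chain $\|S(H_1H_2f)\|_{L^p}\lesssim\|S(f)\|_{L^p}\approx\|f\|_{H^p_{\mathrm{prod}}}$.

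A simplification relative to the theorem is that here the source and target exponents coincide, $p=q$, so the Minkowski integral inequality step disappears entirely: the exponent $q/p=1$, and the two one-dimensional integrations decouple, so I would simply swap the order of the (nonnegative) $\mathrm{d}x\,\mathrm{d}y$ integrals by Tonelli's theorem. Consequently the argument runs for the full range $0<p\le1$ with no restriction of the form $p<q$.

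The hard part, as in the theorem, is the one-dimensional input fed into the iteration: the boundedness of the Hilbert transform on $H^p(\R;X)$ for a Hilbert space $X$ and $0<p\le1$. In the scalar case this is classical, since $h$ is a convolution Calder\'on--Zygmund kernel whose symbol $-\mathrm{i}\,\sign(\xi)$ is bounded. The vector-valued version follows by the same atomic argument: the kernel $h$ is scalar-valued, hence acts on $X$-valued atoms exactly as in the scalar case, preserving support and size and respecting the moment cancellation of order $N=[1/p-1]$. The only genuine care is the principal-value nature of $h$ together with the verification that $H$ maps $H^p(\R;X)$-atoms into a fixed multiple of $H^p(\R;X)$; once this is in place as a black box, in the same way the theorem's footnote arranges the vector-valued Hardy--Littlewood--Sobolev inequality, the remainder of the proof is the formal bookkeeping described above.
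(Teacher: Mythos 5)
Your argument is essentially identical to the paper's own proof: the same vector-valued peeling via the $S$-function characterization, with the fractional kernels replaced by the Hilbert kernel and the vector-valued Hardy--Littlewood--Sobolev inequality replaced by the $H^p(\R;X)\mapsto H^p(\R;X)$ boundedness of the Hilbert transform for Hilbert-space-valued functions (the paper's Proposition~\ref{Hilbert transform is bounded on X-valued Hardy spaces}, which it likewise treats as a black box by reduction to the scalar case). Your observation that the Minkowski step degenerates to Tonelli when $p=q$ matches what the paper does implicitly, so the two proofs coincide in all substantive respects.
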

\begin{rmk}
	This result should not be new, but the author is not able to find a literature that states it precisely.
	The $H^p_{\rm{prod}}\mapsto L^p$ boundedness of iterated Hilbert transform is essentially due to Pipher. In Theorem~2.2 of \cite{Pipher}, it briefly mentioned that more smoothness of the kernel improves the range $p$ from $p_0<p\leq1$ to $p>0$. A precise statement can be found in \cite{Weisz}, Theorem~2. Both proofs rely heavily on Calder\'on--Zygmund theory on $H^p_{\rm{prod}}$.
\end{rmk}
\begin{proof}
	The 1-dimensional case is classical. We assume that $d=2$ for simplicity. Write  $H_1H_2f=f\ast_1k_1\ast_2k_2$,
	 where $k_1(x)=\frac{1}{x}$,\ 
	$k_2(y)=\frac{1}{y},$
	and $x,y\in \R.$ For $\psi$ as in \eqref{Defining function of Lusin area integral}, the $S$-function of 	$H_1H_2f$ is
	\begin{equation*}
		\begin{aligned}
			\bigg(\iint_{\Gamma_2\times \Gamma_1}
			\bigg|\psi_{\rho_2}\ast_2k_2\ast_2\Big[(f\ast_1k_1\ast_1\psi_{\rho_1})(x-s,\cdot)\Big]\bigg|^2(y-t)
			\,
			\frac{\mathrm{d}s\mathrm{d}\rho_1}{\rho_1^2}\frac{\mathrm{d}t\mathrm{d}\rho_2}{\rho_2^2}\bigg)^{\frac{1}{2}}.\\
		\end{aligned}
	\end{equation*}
	We define
	\begin{equation*}
		\begin{aligned}
			F_{u,\rho_1}(y)
			\coloneqq
			(f\ast_1k_1\ast_1\psi_{\rho_1})(u,y)
			=
			\Big[\big(f\ast_1k_1\ast_1\psi_{\rho_1}\big)(\cdot,y)\Big](u).
		\end{aligned}
	\end{equation*}
	Then (the space $X$ was defined in the proof of Theorem~\ref{Hardy--Littlewood--Sobolev inequality in product H^p spaces})
	\begin{equation*}
		\begin{aligned}
			S(H_1H_2f)(x,y)
			=
			\bigg(\int_{\Gamma_2}\Big\|\psi_{\rho_2}\ast_2k_2\ast_2F_{x-s,\rho_1}\Big\|^2_{X}(y-t)\,\frac{\mathrm{d}t\mathrm{d}\rho_2}{\rho_2^2}\bigg)^{\frac{1}{2}}.\\
		\end{aligned}
	\end{equation*}
	Our goal is to estimate $\iint|S(H_1H_2f)|^p\,\mathrm{d}y\mathrm{d}x$, let us first calculate the $\int\mathrm{d}y$ integral:
	\begin{equation*}
		\begin{aligned}
			\bigg(\int_{\R}|S(H_1H_2f)|^p\,\mathrm{d}y\bigg)^\frac{1}{p}
			=\|S_X(k_2\ast_2F_{x-s,\rho_1})\|_{L^q(\R,\mathrm{d}y)}
			\approx
			\|k_2\ast_2F_{x-s,\rho_1}\|_{H^p(\R,\mathrm{d}y;X)}.
		\end{aligned}
	\end{equation*}
	We will use the following proposition:
	\begin{prop}\label{Hilbert transform is bounded on X-valued Hardy spaces}
		Let $X$ be a Hilbert space, then the $X$-valued Hilbert transform $H$ is bounded from $H^p(\R;X)$ to itself for all $0<p\leq1$.
	\end{prop}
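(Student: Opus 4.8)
The plan is to reduce to the scalar-type argument via the atomic and molecular theory of $H^p(\R;X)$, exploiting that $X$ is a Hilbert space. First I would record the one fact that genuinely uses the Hilbert space structure: since the multiplier $-\mathrm{i}\sign(\xi)$ of $H$ is bounded and $X$ is a Hilbert space, Plancherel's theorem holds for $L^2(\R;X)$ and $H$ is bounded on $L^2(\R;X)$ with norm $1$. Combined with the atomic decomposition of $H^p(\R;X)$ (the Blasco--Xu atoms referenced earlier), it suffices to prove a uniform bound $\|Ha\|_{H^p(\R;X)}\lesssim 1$ over all $H^p(\R;X)$-atoms $a$; the general case then follows from the $p$-subadditivity of $\|\cdot\|_{H^p}^p$ applied to an atomic decomposition $f=\sum_j\lambda_j a_j$, after first establishing the bound on the $L^2$-dense subspace on which $H$ is already defined.

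By translation and dilation invariance of both $H$ and the $H^p$ quasi-norm, I may assume $a$ is supported in $I=[-1,1]$ with $\|a\|_{L^2(\R;X)}\le 1$ and $\int_\R y^k a(y)\,\mathrm{d}y=0$ in $X$ for $0\le k\le N$, where $N=[1/p-1]$. I would then show that $Ha$ is, up to a fixed constant, an $H^p(\R;X)$-molecule centered at $0$, and invoke the Hilbert-valued molecule-to-$H^p$ estimate. Three estimates are needed: (i) the size bound $\|Ha\|_{L^2(\R;X)}\lesssim\|a\|_{L^2(\R;X)}\le 1$ from the $L^2$-boundedness above; (ii) the decay bound, where for $|x|>2$ I subtract the degree-$N$ Taylor polynomial of $y\mapsto 1/(x-y)$ at $y=0$ and use the vanishing moments to write $\pi\,Ha(x)=\int_I a(y)\big[\tfrac{1}{x-y}-\sum_{k=0}^{N}\tfrac{y^k}{x^{k+1}}\big]\,\mathrm{d}y$, whose kernel is $O(|y|^{N+1}|x|^{-(N+2)})$, yielding $\|Ha(x)\|_X\lesssim |x|^{-(N+2)}$; and (iii) the moment cancellation $\int_\R x^\gamma Ha(x)\,\mathrm{d}x=0$ for $0\le\gamma\le N$, for which the decay in (ii) first guarantees absolute convergence (as $|x|^{\gamma-N-2}$ is integrable at infinity exactly when $\gamma\le N$).

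For (iii) I would argue on the Fourier side: the vanishing moments of $a$ give $\widehat{a}(\xi)=O(|\xi|^{N+1})$ near the origin, so that $\widehat{Ha}(\xi)=-\mathrm{i}\sign(\xi)\widehat{a}(\xi)$, although only of class $C^N$ (not $C^{N+1}$) at $\xi=0$ because of the $\sign$ factor, still has all derivatives up to order $N$ vanishing there; pairing with $X^*$ reduces this to the scalar statement, and the derivatives of $\widehat{Ha}$ at $0$ are precisely constant multiples of the moments of $Ha$. With (i)--(iii) in hand, the molecular characterization of $H^p(\R;X)$, which extends verbatim from the scalar case once $X$ is Hilbert by the same black-box principle used for the atomic decomposition, gives $\|Ha\|_{H^p(\R;X)}\lesssim 1$.

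I expect the main obstacle to be precisely step (iii): the moment and cancellation bookkeeping. Because the multiplier $\sign(\xi)$ fails to be smooth at the origin, one must check that its singularity is weak enough, the factor $|\xi|^{N+1}$ against one lost derivative, for the first $N+1$ moments of $Ha$ to vanish; and since it is the decay estimate (ii) that makes these moments well defined at all, steps (ii) and (iii) have to be carried out in tandem. Everything else is routine once the Hilbert-space structure is used to transport the scalar atomic and molecular machinery.
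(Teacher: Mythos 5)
Your proposal is correct, and it is considerably more detailed than what the paper actually provides: the paper's ``proof'' of this proposition is a one-line citation to the scalar-valued textbook result together with the remark that nothing changes when $X$ is a Hilbert space. Your atomic-to-molecular argument is a legitimate way of filling in that black box, and the three estimates all check out. In (ii), the geometric-series tail $\sum_{k\ge N+1} y^k/x^{k+1}=y^{N+1}/\bigl(x^{N+1}(x-y)\bigr)$ indeed gives $\|Ha(x)\|_X\lesssim |x|^{-(N+2)}$ for $|x|>2$, and since $N+2=[1/p-1]+2>1/p$, this decay is (just) enough both to make the moments $\int x^\gamma Ha\,\mathrm{d}x$ absolutely convergent for $\gamma\le N$ and to satisfy the molecular size condition $\||x|^bHa\|_{L^2}<\infty$ for some $b>1/p-1/2$. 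In (iii), you correctly isolate the only genuinely delicate point: $\sign(\xi)\widehat{a}(\xi)$ is merely $C^N$ at the origin because one derivative is lost to the $\sign$ factor, but the $O(|\xi|^{N+1})$ vanishing of $\widehat{a}$ still forces all derivatives up to order $N$ to vanish there, and pairing with $X^*$ (legitimate since $X$ is Hilbert, as $\langle Ha(\cdot),e\rangle=H\langle a(\cdot),e\rangle$) reduces the moment identity to the scalar statement. The remaining ingredients --- $L^2(\R;X)$-boundedness of $H$ via Plancherel, the Blasco--Xu atomic decomposition, the molecular characterization, and the density argument ensuring that uniform bounds on atoms upgrade to boundedness on $H^p$ --- are exactly the scalar machinery transported to the Hilbert-valued setting, which is the same principle the paper invokes wholesale. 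The only caution worth recording is that the last of these (atoms-to-operator-bound) is known to fail for general operators uniformly bounded on atoms, so the reduction does require, as you note, that $H$ is already defined and bounded on a dense subspace in the $L^2$ sense.
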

	\begin{proof}(Proof of Proposition~\ref{Hilbert transform is bounded on X-valued Hardy spaces})
This is identical\footnote{Of course, the vector-valued theory is very different from the scalar-valued theory, but we need not worry about these matters when $X$ is a Hilbert space (particularly UMD).} to the scalar-valued case, which can be found in \cite{MFA}, Chapter 2, Section 2.4, Theorem 2.4.5.
	\end{proof}

	Proposition~\ref{Hilbert transform is bounded on X-valued Hardy spaces} implies that $\|k_2\ast_2F_{x-s,\rho_1}\|_{H^p(\R,\mathrm{d}y;X)}
	\lesssim
	\|F_{x-s,\rho_1}\|_{H^p(\R,\mathrm{d}y;X)}$. So we have
	\begin{equation*}
		\begin{aligned}
			\bigg(\int_{\R}\int_{\R}|S(H_1H_2f)|^p\,\mathrm{d}y\mathrm{d}x\bigg)^\frac{1}{p}
			&\lesssim
			\bigg(\int_{\R}\|F_{x-s,\rho_1}\|_{H^p(\R,\mathrm{d}y;X)}^p\,\mathrm{d}x\bigg)^{\frac{1}{p}}\\
			&=
			\bigg(\int_{\R}	\int_\R |S_X(F_{x-s,\rho_1})|^p(y)\,\mathrm{d}x\mathrm{d}y\bigg)^{\frac{1}{p}}\\
			&=
			\bigg(\int_{\R}		\|k_1\ast_1 G_{y-t,\rho_2}\|^p_{H^p(\R,\mathrm{d}x;Y)}\,\mathrm{d}y\bigg)^{\frac{1}{p}},\\
		\end{aligned}
	\end{equation*}
	where
	$G_{v,\rho_2}(x)
	\coloneqq
	\big[(f\ast_2\psi_{\rho_2})(x,\cdot)\big](v)$.
	
	Again, Proposition~\ref{Hilbert transform is bounded on X-valued Hardy spaces} implies $\|k_1\ast_1 G_{y-t,\rho_2}\|_{H^p(\R,\mathrm{d}x;Y)}
	\lesssim
	\| G_{y-t,\rho_2}\|_{H^p(\R,\mathrm{d}x;Y)}$. To conclude,
	\begin{equation*}
		\begin{aligned}
			\|H_1H_2f\|_{H^p_{\mathrm{prod}}}
			\lesssim
			\bigg(\int_{\R}	\| G_{y-t,\rho_2}\|^p_{H^p(\R,\mathrm{d}x;Y)}\,\mathrm{d}y\bigg)^{\frac{1}{p}}
			\approx
			\|f\|_{H^p_{\mathrm{prod}}}.
		\end{aligned}
	\end{equation*}
\end{proof}

\section{New proof of several Hardy's inequalities on product spaces}\label{Section: Hardy's inequality on product spaces}
We collect main results from a recent paper \cite{Dyachenko--Nursultanov--Tikhonov--Weisz}. The first one is the Hardy--Littlewood inequality.
\begin{thm}[{\cite{Dyachenko--Nursultanov--Tikhonov--Weisz},  Theorem~4}]\label{Hardy--littlewood inequality on product H^p spaces}
	Assume that $0<p\leq 1$ and $f\in H^p_{\rm{prod}}(\R^d)$, then
		\begin{equation*}
				\bigg(\int_{\R^d}\frac{|\widehat{f}(\xi)|^p}{|\xi_1\xi_2\cdots\xi_d|^{2-p}}\,\mathrm{d}\xi\bigg)^{\frac{1}{p}}
			\lesssim_{d,p}
			\|f\|_{H^p_{\mathrm{prod}}(\R^d)}.
		\end{equation*}
\end{thm}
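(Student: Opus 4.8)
The plan is to run exactly the same iteration as in the proof of Theorem~\ref{Hardy--Littlewood--Sobolev inequality in product H^p spaces}, but with the product fractional integral on the left replaced by the weighted Fourier integral, and with the vector-valued Hardy--Littlewood--Sobolev inequality replaced by the one-dimensional Hardy--Littlewood inequality \eqref{Classical Hardy--Littlewood inequality}. The only analytic input I will need is the Hilbert-space-valued version of \eqref{Classical Hardy--Littlewood inequality} in dimension one: if $X$ is a Hilbert space and $g\in H^p(\R;X)$ with $0<p\leq1$, then
\[
	\int_\R\frac{\|\widehat{g}(\xi)\|_X^p}{|\xi|^{2-p}}\,\mathrm{d}\xi\lesssim\|g\|_{H^p(\R;X)}^p.
\]
Exactly as with the vector-valued Hilbert transform (Proposition~\ref{Hilbert transform is bounded on X-valued Hardy spaces}) and the vector-valued Hardy--Littlewood--Sobolev inequality used above, this holds by repeating the scalar proof verbatim, since $X$ is a Hilbert space (in particular UMD); I will treat it as a black box. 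As before I only write out the case $d=2$, and I fix $\psi$ as in \eqref{Defining function of Lusin area integral}.

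First I would split the integral by Tonelli's theorem and apply the \emph{scalar} inequality in the first frequency variable. For fixed $\xi_2$, the map $\xi_1\mapsto\widehat{f}(\xi_1,\xi_2)$ is the one-dimensional Fourier transform (in $x_1$) of $g_{\xi_2}(x_1)\coloneqq\mathcal{F}_2(f)(x_1,\xi_2)$, so
\[
	\int_\R\frac{|\widehat{f}(\xi_1,\xi_2)|^p}{|\xi_1|^{2-p}}\,\mathrm{d}\xi_1\lesssim\|g_{\xi_2}\|_{H^p(\R,\mathrm{d}x_1)}^p.
\]
Next I pass to the $S$-function in the $x_1$-variable. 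With $X=L^2(\Gamma_1,\rho_1^{-2}\,\mathrm{d}s\,\mathrm{d}\rho_1)$ as in the previous proof, the $S$-function characterization gives $\|g_{\xi_2}\|_{H^p(\R,\mathrm{d}x_1)}^p\approx\int_\R\big\|\mathcal{F}_2(\Psi(x_1,\cdot))(\xi_2)\big\|_X^p\,\mathrm{d}x_1$, where $\Psi\colon\R^2\to X$ is the Hilbert-valued lift
\[
	\Psi(x_1,x_2)(s,\rho_1)\coloneqq(f\ast_1\psi_{\rho_1})(x_1-s,x_2);
\]
here the decisive (and routine) point is that convolution in $x_1$ commutes with the partial Fourier transform $\mathcal{F}_2$, so that $(g_{\xi_2}\ast_1\psi_{\rho_1})(x_1-s)=\mathcal{F}_2\big[(f\ast_1\psi_{\rho_1})(x_1-s,\cdot)\big](\xi_2)$.

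Combining these two bounds and interchanging the order of integration once more (Tonelli again), the left-hand side of the theorem is controlled by
\[
	\int_\R\int_\R\frac{\big\|\mathcal{F}_2(\Psi(x_1,\cdot))(\xi_2)\big\|_X^p}{|\xi_2|^{2-p}}\,\mathrm{d}\xi_2\,\mathrm{d}x_1.
\]
For each fixed $x_1$, the inner integral is precisely the weighted Fourier integral of the $X$-valued function $x_2\mapsto\Psi(x_1,x_2)$, so the Hilbert-space-valued inequality quoted above gives $\int_\R|\xi_2|^{p-2}\big\|\mathcal{F}_2(\Psi(x_1,\cdot))(\xi_2)\big\|_X^p\,\mathrm{d}\xi_2\lesssim\|\Psi(x_1,\cdot)\|_{H^p(\R,\mathrm{d}x_2;X)}^p$. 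A final application of the $X$-valued $S$-function characterization in $x_2$ identifies $S_X(\Psi(x_1,\cdot))(x_2)$ with the product $S$-function $S(f)(x_1,x_2)$, because
\[
	S_X(\Psi(x_1,\cdot))(x_2)^2=\iint_{\Gamma_1\times\Gamma_2}\big|f\ast_1\psi_{\rho_1}\ast_2\psi_{\rho_2}\big|^2(x_1-s,x_2-t)\,\frac{\mathrm{d}s\,\mathrm{d}\rho_1}{\rho_1^2}\,\frac{\mathrm{d}t\,\mathrm{d}\rho_2}{\rho_2^2}.
\]
Integrating in $x_1$ then yields $\|S(f)\|_{L^p}^p\approx\|f\|_{H^p_{\mathrm{prod}}}^p$, which is the desired bound after taking $p$-th roots.

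The only genuinely nontrivial ingredient is the Hilbert-space-valued one-dimensional Hardy--Littlewood inequality; once it is granted, the rest is bookkeeping assembled from Tonelli's theorem, the commutation of $\mathcal{F}_2$ with $x_1$-convolutions, and the same $S$-function characterizations already used in the proof of Theorem~\ref{Hardy--Littlewood--Sobolev inequality in product H^p spaces}. I expect the main obstacle to be purely expository: keeping track of which variable is being transformed, and verifying the (straightforward) measurability and integrability needed to justify each Tonelli interchange, ideally after first reducing to a dense class of sufficiently nice $f$. The general case $d\geq2$ follows by the same scheme, peeling off one frequency variable at a time and absorbing the remaining $d-1$ cone directions into the Hilbert space $X=L^2(\Gamma_1\times\cdots\times\Gamma_{d-1})$.
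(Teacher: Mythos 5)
Your proof is correct, but it takes a genuinely different route from the one in the paper. The paper does not attack the weighted Fourier integral with a vector-valued iteration at all: it first proves the stronger Theorem~\ref{Theorem: Hardy's inequality on product H^p spaces}, bounding $\int|\widehat{f}|^p|\xi_1\cdots\xi_d|^{p-2}\,\mathrm{d}\xi$ by the sum of $\|H_{i_1}\cdots H_{i_k}f\|_p^p$ over all subsets of coordinates, by slicing with Fubini and applying on each slice the \emph{scalar} one-dimensional Hardy--Littlewood inequality together with Uchiyama's characterization $\|g\|_{H^p(\R)}\approx\|g\|_{L^p}+\|Hg\|_{L^p}$ (Proposition~\ref{Uchiyama's characterization on H^p space}); it then deduces Theorem~\ref{Hardy--littlewood inequality on product H^p spaces} by controlling each iterated Hilbert transform via Proposition~\ref{H^p to L^p Boundedness of the iterated Hilbert transform} and a separate Poisson-maximal-function slicing argument for the partial transforms $H_i$, $H_iH_j$. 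You instead run the $S$-function/vector-valued scheme of Theorem~\ref{Hardy--Littlewood--Sobolev inequality in product H^p spaces} directly on the weighted Fourier integral, with the Hilbert-space-valued one-dimensional Hardy--Littlewood inequality as the single black box; your observation that Tonelli replaces the Minkowski step because the source and target exponents coincide is correct, and your identifications of $S(g_{\xi_2})(x_1)$ with $\|\mathcal{F}_2(\Psi(x_1,\cdot))(\xi_2)\|_X$ and of $S_X(\Psi(x_1,\cdot))$ with $S(f)$ check out. The trade-off: your argument is shorter, avoids Uchiyama's theorem and the iterated Hilbert transform entirely, and is uniform with the proof of the product Hardy--Littlewood--Sobolev inequality, at the cost of one more vector-valued black box (which is legitimate --- the atomic proof of the scalar inequality transfers verbatim to Hilbert-space-valued atoms, exactly as with the vector-valued HLS inequality the paper already black-boxes); the paper's detour is longer but yields the refined Theorem~\ref{Theorem: Hardy's inequality on product H^p spaces} as an intermediate result and only invokes scalar one-dimensional inequalities. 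As in the paper's own proof, you should state the reduction to $f\in L^2\cap H^p_{\rm prod}$ explicitly so that $\widehat{f}$ factors as iterated partial Fourier transforms and the slices $g_{\xi_2}$ are honest $L^2$ functions.
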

We give a very sketchy review of the proof of this theorem in \cite{Dyachenko--Nursultanov--Tikhonov--Weisz}.
As a first step, the authors introduced an operator 
\begin{equation}\label{The operator T in Dyachenko et al.'s proof of the Hardy--Littlewood inequality}
	T(f)(\xi)
	\coloneqq
	\Big(\prod_{i=1}^d\xi_i\Big)\widehat{f}(\xi),
\end{equation}
 and then the \textit{Fefferman's boundedness criterion} was applied. For the reader's convenience, we briefly state this criterion. Below, a rectangular $H^p_{\rm{prod}}$-atom is to replace the ball $B$ in the definition of $H^p$-atom with a rectangle $R$, whose edges are parallel to the coordinate axes.
	\begin{namedtheorem}[Fefferman's boundedness criterion]
Assume that $T$ is $L^2(\R^2)$ bounded. Suppose that there exists $\delta>0$, so that for all rectangular $H^1_{\rm{prod}}$-atoms $a$ supported in $R$,
\begin{equation}\label{Fefferman's criterion: decay condition}
	\int_{(\gamma R)^c}|T(a)|\,\mathrm{d}x\lesssim \gamma^{-\delta},\text{ for all }\gamma\geq2,
\end{equation}
where $\gamma R$ is the $\gamma$-fold concentric dilation of $R$, then $T$ is $H^1_{\rm{prod}}\mapsto L^1$ bounded.
	\end{namedtheorem}
This criterion is due to Fefferman (see \cite{Fefferman_PNAS_Papar}, page 840). A direct generalization to $H^1_{\rm{prod}}(\R^3)$ is not true, as there exists a counter-example due to Journ\'e (Section 2 in \cite{Journe_Multiparameter_case}). To apply this criterion to $H^p_{\rm{prod}}(\R^d)$ for $d\geq3$ and $0<p\leq1$, the authors in \cite{Dyachenko--Nursultanov--Tikhonov--Weisz} used a variant form, see Lemma 6 there, or \cite{Weisz_book_Hardy_Spaces}, Chapter~3, Theorem~3.6.12.
The proof of Theorem~3.6.12 in \cite{Weisz_book_Hardy_Spaces}, along with some newly introduced definitions (like the \textit{simple $p$-atoms}), requires some preparatory work. 

Moreover, \textit{Fefferman's criterion} is a result for Lebesgue measure. Due to the nature of the problem here, in Lemma 6 of \cite{Dyachenko--Nursultanov--Tikhonov--Weisz}, the authors apply a (further) variant: replacing the Lebesgue measure in the left hand side of \eqref{Fefferman's criterion: decay condition} by $\mathrm{d}\eta\coloneqq|x_1x_2\cdots x_d|^{-2}\mathrm{d}x$. Therefore, an additional verification of a majorant property (see inequality (38) in \cite{Dyachenko--Nursultanov--Tikhonov--Weisz}) of $\mathrm{d}\eta$ is required.

Two other main results in \cite{Dyachenko--Nursultanov--Tikhonov--Weisz} concerns the Hardy--Ces\`aro operator.
\begin{thm}	\label{Dyachenko et al.'s result on Hardy's inequality of the Hardy--Cesaro operator}
	(Theorem 5, \cite{Dyachenko--Nursultanov--Tikhonov--Weisz})
	Let $\mathcal{H}$ be the Hardy--Ces\`aro operator. Suppose that $f\in H_{\mathrm{prod}}(\R^d)\cap L^2(\R^d)$, then
	\begin{equation*}
		\int_{\R^d}\frac{|\mathcal{H}(\widehat{f})(x)|^p}{|x_1x_2\cdots x_d|^{2-p}}\,\mathrm{d}x
		\lesssim
		\|f\|^p_{H^p_{\mathrm{prod}}}.
	\end{equation*}
\end{thm}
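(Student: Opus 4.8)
The plan is to transport the Hardy--Ces\`aro operator from the frequency side back to the spatial side and then invoke Theorem~\ref{Hardy--littlewood inequality on product H^p spaces}. The computation rests on the one-variable identity, valid for $f\in L^2(\R)$,
\begin{equation*}
	\mathcal H(\widehat f)(x)=\frac1x\int_0^x\widehat f(t)\,\mathrm{d}t=\int_0^1\widehat f(rx)\,\mathrm{d}r=\widehat{\mathcal C f}(x),
\end{equation*}
where $\mathcal C f(s)\coloneqq\int_0^1 r^{-1}f(s/r)\,\mathrm{d}r=\int_s^{\infty}u^{-1}f(u)\,\mathrm{d}u$ (for $s>0$, with the symmetric formula for $s<0$) is the \emph{conjugate Hardy operator} $\mathcal C=\mathcal F^{-1}\mathcal H\mathcal F$. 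The middle equality is the substitution $t=rx$, and the last one follows from the dilation rule $\widehat f(rx)=r^{-1}\widehat{f(\cdot/r)}(x)$ together with Fubini. Tensoring over the coordinates yields $\mathcal H(\widehat f)=\widehat{\mathcal C f}$ on $\R^d$ with $\mathcal C=\mathcal C_1\cdots\mathcal C_d$ acting one variable at a time; the hypothesis $f\in L^2$ makes every step, and in particular $\mathcal C f\in L^2$, legitimate.

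With the identity in hand, the left-hand side of the theorem is exactly $\int_{\R^d}|\widehat{\mathcal C f}(x)|^p|x_1\cdots x_d|^{p-2}\,\mathrm{d}x$, which Theorem~\ref{Hardy--littlewood inequality on product H^p spaces} dominates by $C\|\mathcal C f\|_{H^p_{\mathrm{prod}}}^p$. Hence the theorem reduces to the assertion that $\mathcal C$ is bounded on $H^p_{\mathrm{prod}}(\R^d)$. To prove this I would reuse the machinery of Proposition~\ref{H^p to L^p Boundedness of the iterated Hilbert transform} verbatim: write $\|\mathcal C f\|_{H^p_{\mathrm{prod}}}$ through its $S$-function, freeze all but one variable so that the remaining slice becomes a function valued in the Hilbert space $X$ (respectively $Y$), apply a one-dimensional vector-valued bound for $\mathcal C$ in that variable, and iterate by Fubini. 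The single new input needed is the analogue of Proposition~\ref{Hilbert transform is bounded on X-valued Hardy spaces}: for any Hilbert space $X$ and every $0<p\le1$, the conjugate Hardy operator $\mathcal C$ is bounded on $H^p(\R;X)$.

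For this one-dimensional statement I would argue through the atomic decomposition. Boundedness on $L^2$ is free, since $\mathcal C=\mathcal F^{-1}\mathcal H\mathcal F$ and the Hardy operator is $L^2$-bounded. If $a$ is an $H^p(\R;X)$-atom supported in an interval $I$ centered at $c$ with $\int a(u)u^k\,\mathrm{d}u=0$ for all $k\le N\coloneqq[1/p-1]$, then a short Fubini calculation shows that $\mathcal C a$ inherits the same cancellation, namely $\int (\mathcal C a)(s)s^k\,\mathrm{d}s=\tfrac{1}{k+1}\int a(u)u^k\,\mathrm{d}u=0$ for $k\le N$, and that $\mathcal C a$ vanishes on the far side of $I$. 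Subtracting the degree-$N$ Taylor polynomial of $u\mapsto 1/u$ about $c$ turns the kernel $1/u$ into a gain of order $N+1$ in the ratio of the side length of $I$ to the distance to $I$, so that $\mathcal C a$ is a fixed multiple of an $H^p$-molecule whose $H^p$-norm is bounded independently of $I$. Since the argument only ever sees $\|\cdot\|_X$ and the scaling and cancellation of $a$, it passes unchanged to the $X$-valued setting.

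The main obstacle is precisely this molecular estimate in the range $0<p<1$. For $p\ge1$ the boundedness of $\mathcal C$ is immediate: Minkowski's integral inequality applied to $\mathcal C f=\int_0^1 r^{-1}f(\cdot/r)\,\mathrm{d}r$, combined with the scaling $\|f(\cdot/r)\|_{H^p}=r^{1/p}\|f\|_{H^p}$, gives $\|\mathcal C f\|_{H^p}\le p\|f\|_{H^p}$. For $p<1$ this continuous superposition argument collapses, and one must genuinely exploit the vanishing moments of the atom. Two points deserve attention: the behavior of $\mathcal C a$ near the origin, where the kernel $1/u$ can produce at worst a logarithmic singularity that remains harmless in $H^p$; and the verification that $N=[1/p-1]$ vanishing moments suffice, for every $p\in(0,1]$, to make the molecular tail $p$-summable. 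The latter is exactly why this order of cancellation was fixed in \eqref{Defining function of Lusin area integral}, and it is the same threshold that governs the decay computation.
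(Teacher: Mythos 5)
Your route is genuinely different from the paper's. The paper never touches the adjoint Hardy operator: it writes the left-hand side as an iterated integral, applies the \emph{one-dimensional case of the theorem itself} (taken as known) in the innermost variable to get $\|f_{x_1}\|^p_{H^p(\R,\mathrm{d}y_2)}$, converts that to $\|f_{x_1}\|_{L^p}^p+\|H_2f_{x_1}\|_{L^p}^p$ via Proposition~\ref{Uchiyama's characterization on H^p space}, repeats in the remaining variable, and finishes with Proposition~\ref{H^p to L^p Boundedness of the iterated Hilbert transform} and Proposition~\ref{Han--Li--Lu--Wang's L^p and H^p marjoration}. You instead conjugate by the Fourier transform, use the correct identity $\mathcal H\mathcal F=\mathcal F\mathcal C$ (which does tensorize over coordinates, and is legitimate for $f\in L^2$ by Minkowski, since $\int_0^1 r^{-1}\|f(\cdot/r)\|_2\,\mathrm{d}r=2\|f\|_2$), feed $\mathcal Cf$ into Theorem~\ref{Hardy--littlewood inequality on product H^p spaces}, and reduce everything to the $H^p_{\mathrm{prod}}$-boundedness of $\mathcal C=\mathcal C_1\cdots\mathcal C_d$. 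That reduction is attractive — it exposes Theorem~\ref{Dyachenko et al.'s result on Hardy's inequality of the Hardy--Cesaro operator} as Theorem~\ref{Hardy--littlewood inequality on product H^p spaces} composed with $\mathcal C$, and it would simultaneously reprove the one-dimensional case that the paper imports as a black box. The iteration scheme you propose also survives the fact that $\mathcal C_i$ is not a convolution: in the $S$-function argument of Proposition~\ref{H^p to L^p Boundedness of the iterated Hilbert transform} the operator acting in the active variable never needs to commute with $\psi_{\rho}$ in that same variable, only with everything acting in the frozen variables, which $\mathcal C_i$ does.

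The cost is that the entire proof now rests on a lemma the paper never needs and you only sketch: $\mathcal C$ is bounded on $H^p(\R;X)$ for every $0<p\le1$ and Hilbert $X$. Your sketch has the right ingredients — $\int(\mathcal Ca)(s)s^k\,\mathrm ds=\tfrac1{k+1}\int a(u)u^k\,\mathrm du$, vanishing on the far side of $I$, Taylor subtraction of $1/u$ about the center — and the exponent bookkeeping does close: for an atom on $[c-h,c+h]$ with $c\gg h$ the near-side tail is the \emph{constant} $\kappa=\int a(u)(u^{-1}-P_N(u))\,\mathrm du=O(h^{-1/p}(h/c)^{N+2})$ on all of $(0,c-h)$, and $N+2=[1/p]+1>1/p$ strictly, which is exactly the Taibleson--Weiss molecular threshold. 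But this is a real molecular estimate in the range $p<1$ (the Minkowski superposition argument you note is unavailable there), it must be carried out in the $X$-valued setting, and one must then invoke the vector-valued ``atoms to uniformly bounded molecules implies boundedness'' machinery. Two small corrections to the sketch: for an atom \emph{away} from the origin the issue near $0$ is not a logarithmic singularity but a jump discontinuity ($\mathcal Ca\equiv\kappa$ down to $0^+$ and $\equiv0$ for $s<0$) — harmless for molecules, which carry no smoothness requirement — while the logarithmic blow-up occurs only for atoms whose support meets a neighborhood of the origin, where $\mathcal Ca$ is simply an $L^2$-normalized atom on a comparable interval. So the proposal is a viable and more structural alternative, but as written it trades the paper's short Fubini--Uchiyama iteration for an unproven (though correct-looking) one-dimensional vector-valued theorem that constitutes most of the actual work.
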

To prove this theorem, the authors in \cite{Dyachenko--Nursultanov--Tikhonov--Weisz} define an operator 
$T_{\mathcal{H}}(f)(\xi)
	\coloneqq
	\Big(\prod_{i=1}^d\xi_i\Big)\mathcal{H}(\widehat{f})(\xi)$, and then proceed as in the proof of Theorem~\ref{Hardy--littlewood inequality on product H^p spaces}.

\begin{thm}\label{Dyachenko et al.'s result on the H^p to L^p boundedness of the Hardy--Cesaro operator}
	(Theorem 6, \cite{Dyachenko--Nursultanov--Tikhonov--Weisz})
	Suppose that $f\in H^p_{\mathrm{prod}}(\R^d)\cap L^2(\R^d)$, then
	\begin{equation*}
		\|\mathcal{H}f\|_p
		\lesssim
		\|f\|_{H^p_{\mathrm{prod}}}.
	\end{equation*}
\end{thm}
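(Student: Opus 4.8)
\textit{Approach.} The plan is to follow the template of Proposition~\ref{H^p to L^p Boundedness of the iterated Hilbert transform}, with the directional Hilbert transforms $H_i$ replaced by the one-dimensional Hardy--Ces\`aro operators $\mathcal{H}_i$ (averaging only the $x_i$-slot) and with the target $H^p_{\rm{prod}}$ replaced by $L^p$. The two ingredients are the factorization $\mathcal{H}=\mathcal{H}_1\mathcal{H}_2\cdots\mathcal{H}_d$, together with the fact that each $\mathcal{H}_i$, acting on one variable only, commutes with all the convolutions $\psi_{\rho_j}\ast_j(\cdot)$ and cone translations in the remaining variables; and a one-dimensional lemma, the analogue of Proposition~\ref{Hilbert transform is bounded on X-valued Hardy spaces} with target $L^p$ in place of $H^p$: for every Hilbert space $X$ and every $0<p\le1$, the Hardy--Ces\`aro operator maps $H^p(\R;X)$ boundedly into $L^p(\R;X)$.

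\textit{The iteration.} Granting the lemma and taking $d=2$, I would start directly from $\iint_{\R\times\R}|\mathcal{H}_1\mathcal{H}_2 f|^p\,\mathrm{d}x\,\mathrm{d}y$; note that, unlike in Proposition~\ref{H^p to L^p Boundedness of the iterated Hilbert transform}, the target carries no $S$-function, so the square functions must instead be \emph{created} on the source side as the operators are stripped off. Fixing $y$ and applying the scalar ($X=\C$) lemma in the $x$-variable gives $\int_\R|\mathcal{H}_1(\mathcal{H}_2 f)(x,y)|^p\,\mathrm{d}x\lesssim\|(\mathcal{H}_2 f)(\cdot,y)\|_{H^p(\R,\mathrm{d}x)}^p$, and the $S$-function characterization rewrites the right-hand side as an integral of $|S^{(1)}(\mathcal{H}_2 f)|^p$ (the one-dimensional $S$-function in $x$), thereby introducing the cone $\Gamma_1$ and the Hilbert space $X\coloneqq L^2(\Gamma_1,\rho_1^{-2}\,\mathrm{d}s\,\mathrm{d}\rho_1)$. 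Since $\mathcal{H}_2$ commutes with $\psi_{\rho_1}\ast_1(\cdot)$, the resulting $X$-valued integrand is $\mathcal{H}_2$ applied in the $y$-variable to $\Theta(x,y)\coloneqq[(\psi_{\rho_1}\ast_1 f)(x-s,y)]_{(s,\rho_1)\in\Gamma_1}$; applying the $X$-valued lemma in $y$ and again invoking the $S$-function characterization of $H^p(\R,\mathrm{d}y;X)$ absorbs the second cone $\Gamma_2$ and promotes the Hilbert space to $L^2(\Gamma_1\times\Gamma_2)$. The integrand that remains is exactly $S(f)(x,y)$, so $\|\mathcal{H}_1\mathcal{H}_2 f\|_{L^p}\lesssim\|S(f)\|_{L^p}\approx\|f\|_{H^p_{\rm{prod}}}$. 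Because every space in sight has the single exponent $p$, each interchange is a bare application of Fubini's theorem; no Minkowski step of the kind used in Theorem~\ref{Hardy--Littlewood--Sobolev inequality in product H^p spaces} is needed.

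\textit{The main obstacle.} The one-dimensional lemma is the whole difficulty, and it is where $\mathcal{H}$ parts company with the Hilbert transform: $\mathcal{H}$ is not a convolution, so the UMD/Calder\'on--Zygmund black box behind Proposition~\ref{Hilbert transform is bounded on X-valued Hardy spaces} does not apply, and I would have to argue by hand through the atomic decomposition of $H^p(\R;X)$. Testing on an $X$-valued atom $a$ supported in an interval $I$ with $\int t^j a\,\mathrm{d}t=0$ for $j\le[1/p-1]$, the favourable case is an atom whose support avoids the origin: as $x$ traverses past $I$ the primitive $\int_0^x a$ is either $0$ or the full, cancelling mass of $a$, so $\mathcal{H}a$ stays essentially supported in $I$ and is dominated there by $\|a\|_{L^\infty(X)}$ against the factor $1/x$, giving $\|\mathcal{H}a\|_{L^p(\R;X)}\lesssim1$. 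The delicate point — and, I expect, the true obstacle — is the origin, at which the averaging $\mathcal{H}$ is anchored: an atom straddling $0$ can contribute a non-cancelling half of its mass and hence a $1/x$ tail that fails to lie in $L^p$ for $p\le1$, so the lemma genuinely rests on the one-sided nature of the Hardy--Ces\`aro averaging (equivalently, on the atoms lying to one side of $0$). Controlling this near-origin behaviour is the only serious computation; once it is in place the Hilbert-valued version is free, as $X$ enters solely through the norm $\|\cdot\|_X$ inside the atomic estimate.
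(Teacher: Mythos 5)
Your proposal is correct in its architecture, but it takes a genuinely different route from the paper. You run the vector-valued $S$-function iteration of Section~\ref{Section: Fractional integral on product spaces} (the template of Proposition~\ref{H^p to L^p Boundedness of the iterated Hilbert transform}), creating the cones on the source side and reducing the theorem to a one-dimensional, Hilbert-space-valued lemma $\mathcal{H}\colon H^p(\R;X)\to L^p(\R;X)$, which you must then prove by hand through $X$-valued atoms since $\mathcal{H}$ is not a convolution operator and no UMD black box applies. The paper instead stays entirely scalar: it factors $\mathcal{H}=\mathcal{H}_1\mathcal{H}_2$, applies the classical one-dimensional bound $\|\mathcal{H}g\|_{L^p}\lesssim\|g\|_{H^p(\R)}$ slice by slice, and uses Uchiyama's characterization (Proposition~\ref{Uchiyama's characterization on H^p space}), $\|g\|_{H^p(\R)}\approx\|g\|_{L^p}+\|Hg\|_{L^p}$, to turn each slice-wise $H^p$ norm into $L^p$ quantities that Fubini lets one iterate in the remaining variable; this terminates in sums of $\|H_{i_1}\cdots H_{i_k}f\|_p^p$ over subsets of directions, each controlled by $\|f\|_{H^p_{\mathrm{prod}}}$ via Proposition~\ref{H^p to L^p Boundedness of the iterated Hilbert transform} and the $P_0=\delta_0$ slicing argument given after Theorem~\ref{Theorem: Hardy's inequality on product H^p spaces}. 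The paper's route thus buys a purely scalar argument at the price of invoking Uchiyama and the iterated Hilbert transform result; yours avoids both at the price of one vector-valued atomic lemma, and your observation that only Fubini (no Minkowski) is needed because all exponents equal $p$ is accurate. One correction to your assessment of the lemma: the near-origin case is not actually the delicate point. Every $H^p(\R;X)$-atom $a$ with $0<p\le1$ satisfies $\int a=0$, so the primitive $\int_0^x a$ vanishes both on the gap between $0$ and $\supp(a)$ and beyond $\supp(a)$; consequently $\mathcal{H}a$ is supported in the atom's interval $I$ whether or not $I$ straddles the origin, and $|\mathcal{H}a(x)|=|x|^{-1}\|\int_0^x a\|_X\le\|a\|_{L^\infty(X)}$ there, giving $\|\mathcal{H}a\|_{L^p(\R;X)}^p\le|I|\,\|a\|_{L^\infty(X)}^p\lesssim1$ with no $1/x$ tail ever appearing. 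The only genuine caveats are the standard ones --- passing from uniform bounds on atoms to boundedness on all of $H^p(\R;X)$, and the measurability/Bochner-integrability checks behind the $X$-valued interchanges --- which the paper also treats as routine.
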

\begin{rmk}\label{Failure of H^p to H^p boundeness of the Hardy--Cesaro operator}
	The function $\chi_{(0,1]}-\chi_{(1,2]}$ indicates that $\mathcal{H}$ is not $H^p_{\mathrm{prod}}\mapsto H^p_{\mathrm{prod}}$ bounded.
\end{rmk}

As a byproduct of Proposition~\ref{H^p to L^p Boundedness of the iterated Hilbert transform}, we give shorter proofs of several main results in \cite{Dyachenko--Nursultanov--Tikhonov--Weisz}, which does not rely on Calder\'on--Zygmund theory on product domain. The following claims will be used.
\begin{prop}\label{Han--Li--Lu--Wang's L^p and H^p marjoration}
	For $p\in(0,1]$, if $f\in L^2(\R^d)\cap H^p(\R^d)$, then $\|f\|_{L^p}$ is well defined and $\|f\|_{L^p}\lesssim\|f\|_{H^p}$. The same conclusion holds when $H^p$ is replaced by $H^p_{\mathrm{prod}}$.
\end{prop}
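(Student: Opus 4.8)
The plan is to dominate $|f|$ pointwise by a maximal function and then invoke the maximal-function characterization of the Hardy space. Write $f^{*}(x)\coloneqq\sup_{\delta>0}|f\ast P_{\delta}(x)|$ for the Poisson maximal function appearing in \eqref{Definition of ordinary Hardy spaces}, so that $\|f^{*}\|_{L^p}\approx\|f\|_{H^p}$ by definition. If I can show that
\[
	|f(x)|\le f^{*}(x)\quad\text{for a.e. }x,
\]
then immediately $\int_{\R^d}|f|^{p}\le\int_{\R^d}(f^{*})^{p}\approx\|f\|_{H^p}^{p}<\infty$. This both shows that $\int_{\R^d}|f|^p$ is finite, so that $\|f\|_{L^p}$ is well defined (note that $f\in L^2$ alone does \emph{not} force $f\in L^p$ for $p\le1$ on the infinite-measure space $\R^d$, so the $H^p$ hypothesis is doing genuine work on the tails), and it yields the desired inequality $\|f\|_{L^p}\le\|f^{*}\|_{L^p}\lesssim\|f\|_{H^p}$.

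First I would establish the pointwise domination in the one-parameter setting. Since $f\in L^{2}$, it is a genuine function rather than merely a tempered distribution, and the Poisson maximal operator satisfies the familiar bound $f^{*}\le C\,Mf$, where $M$ is the Hardy--Littlewood maximal operator; in particular $f^{*}$ is of weak type $(2,2)$. Because $f\ast P_{\delta}\to f$ everywhere for $f$ in a dense subclass of $L^{2}$ (say the Schwartz functions), the standard maximal-inequality-plus-density argument upgrades this to $f\ast P_{\delta}\to f$ a.e.\ as $\delta\to0$. Letting $\delta\to0$ then gives $|f(x)|\le f^{*}(x)$ a.e., which completes the scalar case.

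For the product case I would run the identical argument with the product Poisson maximal function $f^{*}_{\mathrm{prod}}(x)\coloneqq\sup_{\delta_{1},\dots,\delta_{d}>0}|f\ast(\otimes_{i=1}^{d}P_{\delta_{i}})(x)|$, for which $\|f^{*}_{\mathrm{prod}}\|_{L^{p}}\approx\|f\|_{H^{p}_{\mathrm{prod}}}$. The only new ingredient needed is the a.e.\ convergence of the product Poisson integral as $(\delta_{1},\dots,\delta_{d})\to0$. Here I would dominate $f^{*}_{\mathrm{prod}}$ by the strong maximal function $M_{1}M_{2}\cdots M_{d}$ (the iterated composition of the one-dimensional Hardy--Littlewood maximal operators), which is bounded on $L^{2}$; combined once more with convergence on the dense class of Schwartz functions, this gives a.e.\ convergence and hence $|f(x)|\le f^{*}_{\mathrm{prod}}(x)$ a.e. The one-line integration displayed above then finishes the proof exactly as before.

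The main obstacle is precisely this multi-parameter a.e.\ convergence: strong differentiation of integrals is delicate and fails for general $L^{1}$ functions (the sharp Jessen--Marcinkiewicz--Zygmund class is $L\log^{d-1}L$), so one cannot hope for such pointwise domination under the $H^{p}_{\mathrm{prod}}$ hypothesis alone. It is exactly the extra assumption $f\in L^{2}$ that rescues the argument, since the strong maximal function is $L^{2}$-bounded and the density argument then applies verbatim. Everything else is routine.
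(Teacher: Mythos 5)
Your argument is correct and is essentially the paper's own (the paper only sketches it): dominate $|f|$ a.e.\ by the Poisson maximal function and integrate. The only cosmetic difference is that the paper gets the pointwise bound more cheaply --- since $f\in L^2$, the (product) Poisson integrals converge to $f$ in $L^2$ as the parameters tend to $0$, so one extracts an a.e.-convergent subsequence and applies Fatou, which sidesteps your discussion of the strong maximal function and strong differentiation entirely.
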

Since $f\in L^2$, the Poisson maximal function $\sup_{\delta>0}|f\ast P_\delta|\in L^2$. We then subtract an a.e. convergent sequence and use Fatou's lemma to obtain $L^p$-integrability of $f$. For details, see \cite{Han--Li--Lu--Wang}, proofs of Theorem ~1.1 and Theorem~1.2.

\begin{prop}[\cite{Uchiyama}, Corollary~17.1]\label{Uchiyama's characterization on H^p space}
	 For $p\in(0,1]$, if $f\in L^2(\R)\cap H^p(\R)$, then
	\begin{equation*}
		\|f\|_{H^p(\R)}\approx\|f\|_{L^p(\R)}+\|Hf\|_{L^p(\R)},
	\end{equation*}
where $H$ is the Hilbert transform.
\end{prop}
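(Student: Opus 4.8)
The plan is to prove the equivalence by establishing the two inequalities separately, reducing everything to the one-dimensional holomorphic Hardy space theory on the upper half plane. Throughout, write $u(x,\delta)=f\ast P_\delta(x)$ for the Poisson extension of $f$ and $v(x,\delta)=Hf\ast P_\delta(x)$ for that of $Hf$. Since $f\in L^2(\R)$ and $H$ is bounded on $L^2(\R)$, both $f$ and $Hf$ lie in $L^2$, so that $F\coloneqq u+\mathrm{i}v$ is a holomorphic function of $z=x+\mathrm{i}\delta$ on the upper half plane, belonging to the holomorphic Hardy class $\mathscr{H}^2$, with nontangential boundary value $f+\mathrm{i}Hf$.

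For the easy direction $\|f\|_{L^p}+\|Hf\|_{L^p}\lesssim\|f\|_{H^p(\R)}$, I would first invoke Proposition~\ref{Han--Li--Lu--Wang's L^p and H^p marjoration} (with $d=1$) to get $\|f\|_{L^p}\lesssim\|f\|_{H^p}$. For the conjugate term, the scalar case $X=\C$ of Proposition~\ref{Hilbert transform is bounded on X-valued Hardy spaces} gives $\|Hf\|_{H^p}\lesssim\|f\|_{H^p}$; since $Hf\in L^2\cap H^p$, a second application of Proposition~\ref{Han--Li--Lu--Wang's L^p and H^p marjoration} yields $\|Hf\|_{L^p}\lesssim\|Hf\|_{H^p}\lesssim\|f\|_{H^p}$. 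Adding the two estimates closes this direction.

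The substantial part is the reverse inequality $\|f\|_{H^p(\R)}\lesssim\|f\|_{L^p}+\|Hf\|_{L^p}$. The engine is the subharmonicity of $|F|^q$ for every $q>0$ (valid because $F$ is holomorphic, so $\log|F|$ is harmonic off the zero set and $|F|^q=\mathrm{e}^{q\log|F|}$ is subharmonic). Because $F\in\mathscr{H}^2$, one has the Poisson majorization $|F(x+\mathrm{i}\delta)|^q\leq(|f+\mathrm{i}Hf|^q\ast P_\delta)(x)$ for all $\delta>0$. Taking the supremum over $\delta$ and using that the Poisson maximal function is pointwise dominated by the Hardy--Littlewood maximal operator $M$, I obtain
\[
\big(F^\ast(x)\big)^q\coloneqq\Big(\sup_{\delta>0}|F(x+\mathrm{i}\delta)|\Big)^q\lesssim M\big(|f+\mathrm{i}Hf|^q\big)(x).
\]
Here comes the self-improvement trick: choose $q$ with $0<q<p$, so that $p/q>1$ and $M$ is bounded on $L^{p/q}(\R)$. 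Since $|\mathrm{Re}\,F|\leq|F|$ gives $\sup_\delta|f\ast P_\delta|\leq F^\ast$, I estimate
\[
\|f\|_{H^p}^p\leq\int_\R(F^\ast)^p=\int_\R\big[(F^\ast)^q\big]^{p/q}\lesssim\int_\R\big[M(|f+\mathrm{i}Hf|^q)\big]^{p/q}\lesssim\int_\R|f+\mathrm{i}Hf|^p,
\]
and finally $\int_\R|f+\mathrm{i}Hf|^p=\int_\R(f^2+(Hf)^2)^{p/2}\lesssim\|f\|_{L^p}^p+\|Hf\|_{L^p}^p$, which completes the argument.

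The delicate step is justifying the Poisson majorization $|F(x+\mathrm{i}\delta)|^q\leq(|f+\mathrm{i}Hf|^q\ast P_\delta)(x)$ rigorously: this is exactly where the hypothesis $f\in L^2$ is indispensable, as it places $F$ in $\mathscr{H}^2$, guarantees that $F$ is recovered from its boundary trace, and ensures that the subharmonic function $|F|^q$ (with $|f+\mathrm{i}Hf|^q\in L^{2/q}\subset L^1_{\mathrm{loc}}$ for $q\leq 2$) is dominated by the Poisson integral of that trace. By contrast, the conceptual heart is the elementary but crucial choice $q<p$, which converts a useless $L^1$ endpoint — where $M$ fails to be bounded — into a favorable $L^{p/q}$ estimate with $p/q>1$.
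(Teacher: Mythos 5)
Your proof is correct, but it is doing substantially more work than the paper, and for the key direction it follows a genuinely different route. The paper treats the inequality $\|f\|_{H^p(\R)}\lesssim\|f\|_{L^p(\R)}+\|Hf\|_{L^p(\R)}$ as a black box imported from Uchiyama's Corollary~17.1, and only supplies the converse inequality, exactly as you do in your ``easy direction'': $\|f\|_{L^p}\lesssim\|f\|_{H^p}$ from Proposition~\ref{Han--Li--Lu--Wang's L^p and H^p marjoration}, and $\|Hf\|_{L^p}\lesssim\|Hf\|_{H^p}\lesssim\|f\|_{H^p}$ from the $H^p$-boundedness of $H$ together with a second application of that proposition (the hypothesis $f\in L^2$ entering precisely to make $\|f\|_{L^p}$ and $\|Hf\|_{L^p}$ well defined). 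What you add is a self-contained proof of the hard direction via the classical complex-variable argument: form the holomorphic extension $F=u+\mathrm{i}v$, use subharmonicity of $|F|^q$ and the Poisson majorization for $F\in\mathscr{H}^2$, then the self-improvement trick $0<q<p$ so that the Hardy--Littlewood maximal operator acts boundedly on $L^{p/q}$. This is the standard one-dimensional argument (Burkholder--Gundy--Silverstein/Fefferman--Stein style) and is sound; you correctly identify the Poisson majorization of the subharmonic function as the step where $f\in L^2$ is indispensable, and the remaining estimates ($|\mathrm{Re}\,F|\leq|F|$, $(a^2+b^2)^{p/2}\lesssim a^p+b^p$) are elementary. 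The trade-off is transparency versus economy: your version makes the proposition independent of Uchiyama's much more general (and much harder) constructive decomposition theorem, which is arguably more natural in one dimension where the conjugate-function machinery is available; the paper's version is shorter because it only needs the statement as a tool and is content to cite the literature for the substantive half.
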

The monogrpah \cite{Uchiyama} states that $\|f\|_{H^p(\R)}\lesssim\|f\|_{L^p(\R)}+\|Hf\|_{L^p(\R)}$,
	but by Proposition~\ref{Han--Li--Lu--Wang's L^p and H^p marjoration}, we know that
$\|f\|_{L^p(\R)}\lesssim	\|f\|_{H^p(\R)},$ and $	\|Hf\|_{L^p(\R)}
\lesssim
\|f\|_{H^p(\R)}$.
The $L^2(\R)$-integrability of $f$ ensures that (by Proposition~\ref{Han--Li--Lu--Wang's L^p and H^p marjoration}) the quantity $\|f\|_{L^p(\R)}$ is well-defined.

We prove a slightly stronger form of Theorem~\ref{Hardy--littlewood inequality on product H^p spaces}.
	\begin{thm}\label{Theorem: Hardy's inequality on product H^p spaces}
		For $p\in(0,1]$ and $f\in L^2(\R^3)\cap H^p_{\rm{prod}}(\R^3)$, the following inequality is true:
			\begin{equation*}
			\int_{\R^3}\frac{|\widehat{f}(t)|^p}{|t_1t_2t_3|^{2-p}}\,\mathrm{d}t
			\lesssim
			\|f\|_p+
			\|H_1H_2H_3f\|^p_p+
			\sum_{1\leq i\leq3}\|H_if\|^p_p+
			\sum_{1\leq i < j\leq3}\|H_iH_jf\|^p_p.
		\end{equation*}
	\end{thm}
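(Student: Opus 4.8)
The plan is to reduce the three-dimensional inequality to three successive applications of a single one-dimensional lemma, glued together by Fubini's theorem; this is the concrete realization of the slogan in the introduction that only the $1$-dimensional theory and Fubini's theorem are needed. The lemma I would isolate first is the following scalar statement: for every $g\in L^2(\R)$ and $0<p\le1$,
\[
\int_{\R}\frac{|\widehat{g}(\xi)|^p}{|\xi|^{2-p}}\,\mathrm{d}\xi
\lesssim_{p}
\|g\|_{L^p(\R)}^p+\|Hg\|_{L^p(\R)}^p .
\]
This combines the classical one-dimensional Hardy--Littlewood inequality \eqref{Classical Hardy--Littlewood inequality} (the case $d=1$) with Uchiyama's conjugate-function bound in the form $\|g\|_{H^p(\R)}\lesssim\|g\|_{L^p}+\|Hg\|_{L^p}$ discussed after Proposition~\ref{Uchiyama's characterization on H^p space}: if the right-hand side is infinite the bound is vacuous, while if it is finite then $g\in L^2\cap H^p(\R)$ with $\|g\|_{H^p}$ controlled, and \eqref{Classical Hardy--Littlewood inequality} bounds the left-hand side by $\|g\|_{H^p}^p$.

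The iteration then proceeds one frequency variable at a time. Writing the target integral $J$ as an iterated integral and freezing $(t_1,t_2)$, I would apply the lemma in the variable $t_3$ to the slice $h(x_3):=(\mathcal{F}_1\mathcal{F}_2 f)(t_1,t_2,x_3)$, whose $\mathcal{F}_3$-transform is $\widehat{f}(t_1,t_2,t_3)$. Since the Hilbert transform in $x_3$ commutes with $\mathcal{F}_1,\mathcal{F}_2$, the conjugate slice is $(\mathcal{F}_1\mathcal{F}_2 H_3 f)(t_1,t_2,\cdot)$, so the lemma replaces the $t_3$-integral by $\int_{\R}|\mathcal{F}_1\mathcal{F}_2 f|^p\,\mathrm{d}x_3+\int_{\R}|\mathcal{F}_1\mathcal{F}_2 H_3 f|^p\,\mathrm{d}x_3$. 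Pulling the $x_3$-integration outside by Tonelli (all integrands are nonnegative) leaves, for each fixed $x_3$, a two-dimensional weighted frequency integral of $\mathcal{F}_1\mathcal{F}_2 f(\cdot,\cdot,x_3)$ and of $\mathcal{F}_1\mathcal{F}_2 H_3 f(\cdot,\cdot,x_3)$. Repeating the identical move in $t_2$ (spawning a factor $H_2$) and then in $t_1$ (spawning a factor $H_1$) collapses every remaining frequency weight into spatial integrals $\int_{\R}|\cdots|^p$, and the bookkeeping of ``identity versus Hilbert transform'' at each of the three stages produces exactly the $2^3=8$ operators $H_1^{\varepsilon_1}H_2^{\varepsilon_2}H_3^{\varepsilon_3}f$ with $\varepsilon_i\in\{0,1\}$. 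Integrating back over the spatial variables then yields precisely $\|f\|_p^p+\sum_i\|H_if\|_p^p+\sum_{i<j}\|H_iH_jf\|_p^p+\|H_1H_2H_3f\|_p^p$, which is the claimed right-hand side.

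The remaining work is measure-theoretic justification of this formal iteration, and I expect the only genuine obstacle to be organizational rather than analytic. Each $H_i$ is an $L^2(\R^3)$-bounded Fourier multiplier, so $f$ and all of its iterated Hilbert transforms lie in $L^2(\R^3)$, and Plancherel in the transformed variables keeps the partial transforms in $L^2$; by Fubini almost every one-dimensional slice is then in $L^2(\R)$, which is exactly the hypothesis under which the one-dimensional lemma applies to possibly-non-$H^p$ slices. All interchanges of integration are legitimate because the integrands are nonnegative, so Tonelli applies throughout, the only point to verify being the joint measurability of the slicewise bounds in the frozen parameters, which is routine. Finally, $\|f\|_p$ is well defined and finite by the $L^p$--$H^p_{\mathrm{prod}}$ majorization of Proposition~\ref{Han--Li--Lu--Wang's L^p and H^p marjoration}, while any term on the right-hand side that happens to be infinite makes the inequality trivially true. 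The part demanding the most care is simply keeping the commutation relations between $H_i$, $\mathcal{F}_j$, and the frequency weights straight while tracking which subset of $\{H_1,H_2,H_3\}$ each summand carries; once the one-dimensional lemma is in hand the three-dimensional statement, and by the same argument the general $d$-dimensional one, follows mechanically.
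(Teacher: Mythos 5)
Your proposal is correct and follows essentially the same route as the paper: the key one-dimensional lemma (classical Hardy--Littlewood combined with Uchiyama's bound $\|g\|_{H^p}\lesssim\|g\|_{L^p}+\|Hg\|_{L^p}$ applied to $L^2$ slices) is exactly what the paper uses, and the iteration over the frequency variables via Tonelli, with Plancherel guaranteeing that almost every slice of the partial Fourier transforms lies in $L^2(\R)$, matches the paper's argument step for step. The only cosmetic difference is that the paper additionally notes $f\in L^1$ (by interpolation between $L^p$ and $L^2$) to justify the pointwise formula for the partial Fourier transforms, a detail subsumed in your ``routine measurability'' remarks.
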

\begin{rmk}
We use $d=3$ here just because the $3$-dimensional case is typical enough, and the notations are still not too complicated. When $d=2$, it becomes
\begin{equation*}
	\int_{\R^2}\frac{|\widehat{f}(t_1,t_2)|^p}{|t_1|^{2-p}|t_2|^{2-p}}\,\mathrm{d}t
	\lesssim
	\|f\|^p_{p}
	+
	\|H_1f\|^p_{p}
	+
	\|H_2f\|^p_{p}
	+
	\|H_1H_2f\|^p_{p}.
\end{equation*}
The reader can easily write down the statement for $d\in\N$. Also, this theorem is not new when $p=1$, which is due to  Angeloni, Liflyand and Vinti, see \cite{Angeloni--Liflyand--Vinti}, Proposition 1.
\end{rmk}
	\begin{proof}
	By Proposition~\ref{Han--Li--Lu--Wang's L^p and H^p marjoration}, for $f\in H_{\mathrm{prod}}^p\cap L^2$ we have $\|f\|_{p}^p\lesssim\|f\|^p_{H_{\mathrm{prod}}^p}$. Interpolation shows that $f\in L^1$.  Now the Fubini--Tonelli's theorem allows us to write the integral as 
		\begin{equation*}
			\int_{\R^2}\frac{1}{|t_1t_2|^{2-p}}\bigg(\int_\R\frac{|\mathcal{F}_3\mathcal{F}_2\mathcal{F}_1f|^p}{|t_3|^{2-p}}\,\mathrm{d}t_3\bigg)\mathrm{d}t_2\mathrm{d}t_1.
		\end{equation*}
		The one-dimensional Hardy--Littlewood inequality, and Proposition~\ref{Uchiyama's characterization on H^p space} imply that, for a.e. $t_1,t_2\in\R$,
		\begin{equation*}
			\begin{aligned}
				\int_\R\frac{|\mathcal{F}_3\mathcal{F}_2\mathcal{F}_1f|^p}{|t_3|^{2-p}}\,\mathrm{d}t_3
				\lesssim
				\|\mathcal{F}_2\mathcal{F}_1f(t_1,t_2,z)\|^p_{L^p(\R,\mathrm{d}z)}
				+
				\|H_3\mathcal{F}_2\mathcal{F}_1f(t_1,t_2,z)\|^p_{L^p(\R,\mathrm{d}z)},
			\end{aligned}
		\end{equation*}
In order to use Proposition~\ref{Uchiyama's characterization on H^p space}, we employed the following corollary of Fubini's theorem and Plancherel's identity: For a.e. $t_1,t_2\in\R$, the following function
		\begin{equation}\label{The z variable almost everywhere L^2 boundedness of the function}
			z\mapsto \mathcal{F}_2\mathcal{F}_1f(t_1,t_2,z)
			=
			\int_{\R^2}f(x,y,z)\mathrm{e}^{-2\pi\mathrm{i}(xt_1+yt_2)}\,\mathrm{d}x\mathrm{d}y
		\end{equation}
is $L^2(\R,\mathrm{d}z)$-integrable.

Repeating the above argument to the variable $t_1$ and $t_2$ shows that
	\begin{equation*}
		\begin{aligned}
		\int_{\R^2}	\frac{\|\mathcal{F}_2\mathcal{F}_1f(t_1,t_2,z)\|^p_{L^p(\R,\mathrm{d}z)}}{|t_1t_2|^{2-p}}\,\mathrm{d}t_1\mathrm{d}t_2
		&\lesssim
		\|f\|^p_{L^p(\R^3)}
		+
		\|H_1f\|^p_{L^p(\R^3)}\\
		&+
		\|H_2f\|^p_{L^p(\R^3)}
		+
		\|H_1H_2f\|^p_{L^p(\R^3)},\\
		\int_{\R^2}	\frac{\|H_3\mathcal{F}_2\mathcal{F}_1f(t_1,t_2,z)\|^p_{L^p(\R,\mathrm{d}z)}}{|t_1t_2|^{2-p}}\,\mathrm{d}t_1\mathrm{d}t_2
			&\lesssim
			\|H_3f\|^p_{L^p(\R^3)}
			+
			\|H_1H_3f\|^p_{L^p(\R^3)}\\
			&+
			\|H_2H_3f\|^p_{L^p(\R^3)}
			+
			\|H_1H_2H_3f\|^p_{L^p(\R^3)}.
		\end{aligned}
	\end{equation*}
	\end{proof}
Too see why Theorem~\ref{Theorem: Hardy's inequality on product H^p spaces} implies Theorem~\ref{Hardy--littlewood inequality on product H^p spaces}, let us take $d=3$ for example. Assume that $f\in H^p_{\rm{prod}}\cap L^2$, by Proposition~\ref{H^p to L^p Boundedness of the iterated Hilbert transform} and Proposition~\ref{Han--Li--Lu--Wang's L^p and H^p marjoration}, we know that $\|f\|_p\lesssim\|f\|_{H^p_{\rm{prod}}}$, and $\|H_1H_2H_3f\|_p\lesssim\|H_1H_2H_3f\|_{H^p_{\rm{prod}}}\lesssim\|f\|_{H^p_{\rm{prod}}}$. Now it suffices to show that
\begin{equation*}
	\|H_if\|_p
	\lesssim
	\|f\|_{H^p_{\rm{prod}}},
	\text{ and }
	\|H_iH_jf\|_p
	\lesssim
	\|f\|_{H^p_{\rm{prod}}},
	\text{ whenever }i\neq j.
\end{equation*}
We consider the double Hilbert transform $H_1H_2$, the single operator $H_i$ can be proved by the same way. By definition,
	\begin{equation*}
		\begin{aligned}
			\|H_1H_2f\|_p^p
			=
			\int_{\R}\bigg(\int_{\R^2}|H_1H_2f|^p(x,y,z)\,\mathrm{d}x\mathrm{d}y\bigg)\mathrm{d}z.
		\end{aligned}
	\end{equation*}
	Fixing $z$, we view $f$ as a function $(x,y)\mapsto f_z(x,y)\coloneqq f(x,y,z)$. Notice that for almost everywhere $z\in\R$, the function $f_z\in L^2(\R^2)$ by Fubini's theorem. Therefore, by applying the 2-dimensional case of Proposition~\ref{H^p to L^p Boundedness of the iterated Hilbert transform}, we get, for almost everywhere $z\in\R$,
	\begin{equation*}
		\begin{aligned}
			\int_{\R^2}|H_1H_2f|^p(x,y,z)\,\mathrm{d}x\mathrm{d}y
			=
			\|H_1H_2f_z\|^p_{L^p(\R^2)}
			\lesssim
			\|f_z\|^p_{H^p_{\rm{prod}}(\R^2)}.
		\end{aligned}
	\end{equation*}
 In the introduction, the reader has seen the definition of $H^p_{\rm{prod}}$ by Poisson maximal function:
	\begin{equation*}
		\|f_z\|^p_{H^p_{\rm{prod}}(\R^2)}
		=
		\int_{\R^2}\sup_{\delta_1,\delta_2>0}|P_{\delta_1,\delta_2}\ast  f_z|^p\,\mathrm{d}(x,y).
	\end{equation*}
Obviously, the quantity $P_{\delta_1,\delta_2}\ast  f_z$ can be rewritten as
	\begin{equation*}
		\begin{aligned}
\int_{\R^3}P_{\delta_1}(u)P_{\delta_2}(v)P_{0}(w) f(x-u,y-v,z-w)\,\mathrm{d}u\mathrm{d}v\mathrm{d}w.
		\end{aligned}
	\end{equation*}
	Here we used the fact $(\delta_0\ast f)(z)=f(z)$ and $P_0\coloneqq\delta_0$. Taking supremum with respect to $\delta_1,\delta_2>0$, we have, for almost everywhere $z\in\R$, 
	\begin{equation*}
		\begin{aligned}
			\sup_{\delta_1,\delta_2>0}|P_{\delta_1,\delta_2}\ast_{1,2}  f_z|(x,y)
			&=
			\sup_{\delta_1,\delta_2>0}\bigg|\int_{\R^3}P_{\delta_1}(u)P_{\delta_2}(v)P_{0}(w) f(x-u,y-v,z-w)\,\mathrm{d}u\mathrm{d}v\mathrm{d}w\bigg|\\
			&=
			\sup_{\delta_1,\delta_2>0}|P_{\delta_1,\delta_2,0}\ast_{1,2,3} f|(x,y,z)\\
			&\leq
			\sup_{\delta_1,\delta_2,\delta_3>0}|P_{\delta_1,\delta_2,\delta_3}\ast_{1,2,3} f|(x,y,z)
		\end{aligned}
	\end{equation*}
	Here, the convolution $\ast_{1,2}$ is taken with respect to $x,y$ (the first and second coordinates), while $\ast_{1,2,3}$ is taken for $x,y,z$. Also, when passing through $\sup_{\delta_1,\delta_2>0}|P_{\delta_1,\delta_2,0}\ast_{1,2,3} f|$ to $\sup_{\delta_1,\delta_2,\delta_3>0}|P_{\delta_1,\delta_2,\delta_3}\ast_{1,2,3} f|$, we used the Lebesgue differential theorem:
	\begin{equation*}
		|f(z)|
		=
		\lim_{\delta\rightarrow0}|P_{\delta}\ast f|(z)
		\leq
		\sup_{\delta>0}|P_{\delta}\ast f|(z),\text{ for a.e. }z\in\R.
	\end{equation*} 
	Combining these estimates gives us
	\begin{equation*}
		\begin{aligned}
			\|H_1H_2f\|_{L^p(\R^3)}^p
			&\lesssim
			\int_{\R}\bigg(\iint_{\R^2}\sup_{\delta_1,\delta_2>0}|P_{\delta_1,\delta_2,0}\ast
				f|^p\,\mathrm{d}x\mathrm{d}y\bigg)\,\mathrm{d}z\\
			&\leq
			\int_{\R^3}	\sup_{\delta_1,\delta_2,\delta_3>0}|P_{\delta_1,\delta_2,\delta_3}\ast f|^p\,\mathrm{d}x\mathrm{d}y\mathrm{d}z\\
			&=
			\|f\|^p_{H^p_{\rm{prod}}(\R^3)}.
		\end{aligned}
	\end{equation*}
The proofs of Theorem~\ref{Dyachenko et al.'s result on Hardy's inequality of the Hardy--Cesaro operator} and Theorem~\ref{Dyachenko et al.'s result on the H^p to L^p boundedness of the Hardy--Cesaro operator} are similar to that of Theorem~\ref{Theorem: Hardy's inequality on product H^p spaces}.
	\begin{proof}[proof of Theorem~\ref{Dyachenko et al.'s result on Hardy's inequality of the Hardy--Cesaro operator}]
	For simplicity, we assume $d=2$. The right hand side is
		\begin{equation*}
			\int_{\R}\frac{1}{|x_1|^2}\int_{\R}\frac{1}{|x_2|^2}\bigg|\int_{0}^{x_1}\int_{0}^{x_2}\widehat{f}(t)\,\mathrm{d}t_1\mathrm{d}t_2\bigg|^p\,\mathrm{d}x_1\mathrm{d}x_2.
		\end{equation*}
		We write the innermost integral $\int_{0}^{x_1}\int_{0}^{x_2}$ as
		\begin{equation*}
			\begin{aligned}
\int_{0}^{x_2}\int_{\R}
				\bigg(\int_{0}^{x_1}
				\int_{\R}f(y_1,y_2)\mathrm{e}^{-2\pi\mathrm{i}y_1t_1}\,\mathrm{d}y_1\mathrm{d}t_1\bigg)\mathrm{e}^{-2\pi\mathrm{i}y_2t_2}\,
				\mathrm{d}y_2\mathrm{d}t_2
				=
				\int_{0}^{x_2}\widehat{f_{x_1}}(t_2)\,\mathrm{d}t_2,
			\end{aligned}
		\end{equation*}
		where $f_{x_1}(y_2)\coloneqq
			\int_{0}^{x_1}
			\int_{\R}f(y_1,y_2)\mathrm{e}^{-2\pi\mathrm{i}y_1t_1}\,\mathrm{d}y_1\mathrm{d}t_1$.
		By one-dimensional result\footnote{According to \cite{Dyachenko--Nursultanov--Tikhonov--Weisz}, for this theorem, even the 1-dimensional case is a new result. However, the  1-dimensional case can be easily proved by atomic decomposition, so we omit it here and admit it as true.}
		\begin{equation*}
			\begin{aligned}
				\int_{\R}\frac{1}{|x_2|^{2-p}}\,\bigg|\frac{1}{x_2}\int_{0}^{x_2}\widehat{f_{x_1}}(t_2)\,\mathrm{d}t_2\bigg|^p\,\mathrm{d}x_2
				\lesssim
				\|f_{x_1}\|^p_{H^p(\R,\mathrm{d}y_2)}.
			\end{aligned}
		\end{equation*}
		It is easy to show that for almost everywhere $x_1\in \R$, the function $y_2\mapsto f_{x_1}(y_2)$ belongs to $ L^2(\R,\mathrm{d}y_2)$, so $\|f_{x_1}\|^p_{H^p(\R,\mathrm{d}y_2)}
			\approx
			\|f_{x_1}\|^p_{L^p(\R,\mathrm{d}y_2)}
			+
			\|H_2f_{x_1}\|^p_{L^p(\R,\mathrm{d}y_2)}$.
		By previous estimate, we have
		\begin{equation*}
			\begin{aligned}
				\int_{\R}\frac{1}{|x_1|^2}\int_{\R}\frac{1}{|x_2|^2}\bigg|\int_{0}^{x_1}\int_{0}^{x_2}\widehat{f}(t)\,\mathrm{d}t_1\mathrm{d}t_2\bigg|^p\,\mathrm{d}x_1\mathrm{d}x_2
				\lesssim
				\int_{\R}\frac{	\|f_{x_1}\|^p_{L^p(\R,\mathrm{d}y_2)}
				+\|H_2f_{x_1}\|^p_{L^p(\R,\mathrm{d}y_2)}}{|x_1|^2}\mathrm{d}x_1.
			\end{aligned}
		\end{equation*}		
		Again, one-dimensional result gives
		\begin{equation*}
			\begin{aligned}
				\int_{\R}\frac{	\|f_{x_1}\|^p_{L^p(\R,\mathrm{d}y_2)}
				}{|x_1|^2}\mathrm{d}x_1
			\lesssim	
				\|f\|_{p}^p
				+
				\|H_1f\|_{p}^p.
			\end{aligned}
		\end{equation*}
The rest of the proof is similar to that of the previous theorem.
	\end{proof}

	\begin{proof}[proof of Theorem~\ref{Dyachenko et al.'s result on the H^p to L^p boundedness of the Hardy--Cesaro operator}]
		Still, we  consider the case $d=2$. We have
		\begin{equation*}
			\begin{aligned}
				\|\mathcal{H}f\|^p_p
				=
				\int_{\R}\frac{1}{|x_1|^p}	\int_{\R}\frac{1}{|x_2|^p}\bigg|\int_{0}^{x_2}f_{x_1}(t_2)\mathrm{d}t_2\bigg|^p\,\mathrm{d}x_2\mathrm{d}x_1,\\
			\end{aligned}
		\end{equation*}
		where $f_{x_1}(t_2)\coloneqq\int_{0}^{x_1}f(t_1,t_2)\,\mathrm{d}t_1$. The one-dimensional inequality\footnote{According to \cite{Dyachenko--Nursultanov--Tikhonov--Weisz}, the 1-dimensional inequality is classical, so we use it freely here.} states that
		\begin{equation*}
			\int_{\R}\frac{1}{|x_2|^p}\bigg|\int_{0}^{x_2}f_{x_1}(t_2)\mathrm{d}t_2\bigg|^p\,\mathrm{d}x_2
			\lesssim
			\|f_{x_1}\|^p_{H^p(\R,\mathrm{d}y_2)}
			\approx
			\|f_{x_1}\|^p_{L^p(\R,\mathrm{d}y_2)}
			+
			\|H_2f_{x_1}\|^p_{L^p(\R,\mathrm{d}y_2)}.
		\end{equation*}
	The rest of the proof is similar to that of the previous theorems.
	\end{proof}
\section{Acknowledgement}
This paper was completed at Nicolaus Copernicus University in Toruń, where the author was a postdoctoral researcher at that time. The author would like to thank Yuri Tomilov for introducing him to the theory of Hardy spaces and Hardy type inequalities.
\printbibliography

@book {MFA,
	AUTHOR = {Grafakos, Loukas},
	TITLE = {Modern {F}ourier analysis},
	SERIES = {Graduate Texts in Mathematics},
	VOLUME = {250},
	EDITION = {Third},
	PUBLISHER = {Springer, New York},
	YEAR = {2014},
	PAGES = {xvi+624},
	MRCLASS = {42-01 (42Bxx)},
	MRNUMBER = {3243741},
	MRREVIEWER = {Atanas\ G.\ Stefanov},
}

@article {Dyachenko--Nursultanov--Tikhonov--Weisz,
	AUTHOR = {Dyachenko, Mikhail and Nursultanov, Erlan and Tikhonov, Sergey and Weisz, Ferenc},
	TITLE = {Hardy-{L}ittlewood-type theorems for {F}ourier transforms in
	{$\Bbb{R}^d$}},
	JOURNAL = {J. Funct. Anal.},
	FJOURNAL = {Journal of Functional Analysis},
	VOLUME = {284},
	YEAR = {2023},
	NUMBER = {4},
	PAGES = {Paper No. 109776, 36},
}

@article {Krantz,
	AUTHOR = {Krantz, Steven G.},
	TITLE = {Fractional integration on {H}ardy spaces},
	JOURNAL = {Studia Math.},
	FJOURNAL = {Polska Akademia Nauk. Instytut Matematyczny. Studia
	Mathematica},
	VOLUME = {73},
	YEAR = {1982},
	NUMBER = {2},
	PAGES = {87--94},
}

@book {Uchiyama,
	AUTHOR = {Uchiyama, Akihito},
	TITLE = {Hardy spaces on the {E}uclidean space},
	SERIES = {Springer Monographs in Mathematics},
	PUBLISHER = {Springer-Verlag, Tokyo},
	YEAR = {2001},
}

@article {Han--Li--Lu--Wang,
	AUTHOR = {Han, Yongsheng and Li, Ji and Lu, Guozhen and Wang, Peiyong},
	TITLE = {{$H^p\to H^p$} boundedness implies {$H^p\to L^p$} boundedness},
	JOURNAL = {Forum Math.},
	FJOURNAL = {Forum Mathematicum},
	VOLUME = {23},
	YEAR = {2011},
	NUMBER = {4},
	PAGES = {729--756},
}

@book {Stein_Harmonic_Analysis,
	AUTHOR = {Stein, Elias M.},
	TITLE = {Harmonic analysis: real-variable methods, orthogonality, and
	oscillatory integrals},
	SERIES = {Princeton Mathematical Series},
	VOLUME = {43},
	PUBLISHER = {Princeton University Press, Princeton, NJ},
	YEAR = {1993},
	PAGES = {xiv+695},
}

@book {Stein_Singular_integrals,
	AUTHOR = {Stein, Elias M.},
	TITLE = {Singular integrals and differentiability properties of
	functions},
	SERIES = {Princeton Mathematical Series},
	VOLUME = {No. 30},
	PUBLISHER = {Princeton University Press, Princeton, NJ},
	YEAR = {1970},
	PAGES = {xiv+290},
	MRCLASS = {46.38 (26.00)},
	MRNUMBER = {290095},
	MRREVIEWER = {R.\ E.\ Edwards},
}

@article {Fefferman--Stein,
	AUTHOR = {Fefferman, Robert and Stein, Elias M.},
	TITLE = {Singular integrals on product spaces},
	JOURNAL = {Adv. in Math.},
	FJOURNAL = {Advances in Mathematics},
	VOLUME = {45},
	YEAR = {1982},
	NUMBER = {2},
	PAGES = {117--143},
}

@article {Pipher,
	AUTHOR = {Pipher, Jill},
	TITLE = {Journ\'e's covering lemma and its extension to higher
	dimensions},
	JOURNAL = {Duke Math. J.},
	FJOURNAL = {Duke Mathematical Journal},
	VOLUME = {53},
	YEAR = {1986},
	NUMBER = {3},
	PAGES = {683--690},
}

@article {Journe_Multiparameter_case,
	AUTHOR = {Journ\'e, Jean-Lin},
	TITLE = {Two problems of {C}alder\'on-{Z}ygmund theory on
	product-spaces},
	JOURNAL = {Ann. Inst. Fourier (Grenoble)},
	FJOURNAL = {Universit\'e{} de Grenoble. Annales de l'Institut Fourier},
	VOLUME = {38},
	YEAR = {1988},
	NUMBER = {1},
	PAGES = {111--132},
}

@article {Weisz,
	AUTHOR = {Weisz, Ferenc},
	TITLE = {Singular integrals on product domains},
	JOURNAL = {Arch. Math. (Basel)},
	FJOURNAL = {Archiv der Mathematik},
	VOLUME = {77},
	YEAR = {2001},
	NUMBER = {4},
	PAGES = {328--336},
}

@book {Hytonen_book,
	AUTHOR = {Hyt\"onen, Tuomas and van Neerven, Jan and Veraar, Mark and
	Weis, Lutz},
	TITLE = {Analysis in {B}anach spaces. {V}ol. {I}. {M}artingales and
	{L}ittlewood-{P}aley theory},
	VOLUME = {63},
	PUBLISHER = {Springer, Cham},
	YEAR = {2016},
	PAGES = {xvi+614},
}

@article {Angeloni--Liflyand--Vinti,
	AUTHOR = {Angeloni, Laura and Liflyand, Elijah and Vinti, Gianluca},
	TITLE = {Variation type characterization of product {H}ardy spaces},
	JOURNAL = {Anal. Math. Phys.},
	FJOURNAL = {Analysis and Mathematical Physics},
	VOLUME = {14},
	YEAR = {2024},
	NUMBER = {2},
	PAGES = {Paper No. 12, 15},
}

@article {Blasco--Xu,
	AUTHOR = {Blasco, Oscar and Xu, QuanHua},
	TITLE = {Interpolation between vector-valued {H}ardy spaces},
	JOURNAL = {J. Funct. Anal.},
	FJOURNAL = {Journal of Functional Analysis},
	VOLUME = {102},
	YEAR = {1991},
	NUMBER = {2},
	PAGES = {331--359},
}

@article {Fefferman_PNAS_Papar,
	AUTHOR = {Fefferman, Robert},
	TITLE = {Calder\'on-{Z}ygmund theory for product domains: {$H^p$}
	spaces},
	JOURNAL = {Proc. Nat. Acad. Sci. U.S.A.},
	FJOURNAL = {Proceedings of the National Academy of Sciences of the United
	States of America},
	VOLUME = {83},
	YEAR = {1986},
	NUMBER = {4},
	PAGES = {840--843},
	}

@article {Chang_Feerman_Annals_Paper,
	AUTHOR = {Chang, Sun-Yung A. and Fefferman, Robert},
	TITLE = {A continuous version of duality of {$H\sp{1}$}\ with {BMO} on
	the bidisc},
	JOURNAL = {Ann. of Math. (2)},
	FJOURNAL = {Annals of Mathematics. Second Series},
	VOLUME = {112},
	YEAR = {1980},
	NUMBER = {1},
	PAGES = {179--201},
}

@article {Chang_Feerman_BAMS_survey,
	AUTHOR = {Chang, Sun-Yung A. and Fefferman, Robert},
	TITLE = {Some recent developments in {F}ourier analysis and
	{$H^p$}-theory on product domains},
	JOURNAL = {Bull. Amer. Math. Soc. (N.S.)},
	FJOURNAL = {American Mathematical Society. Bulletin. New Series},
	VOLUME = {12},
	YEAR = {1985},
	NUMBER = {1},
	PAGES = {1--43},
}

@article {Jawerth--Torchinsky,
	AUTHOR = {Jawerth, Bj\"orn and Torchinsky, Alberto},
	TITLE = {A note on real interpolation of {H}ardy spaces in the
	polydisk},
	JOURNAL = {Proc. Amer. Math. Soc.},
	FJOURNAL = {Proceedings of the American Mathematical Society},
	VOLUME = {96},
	YEAR = {1986},
	NUMBER = {2},
	PAGES = {227--232},
}

@book {Weisz_book_Hardy_Spaces,
	AUTHOR = {Weisz, Ferenc},
	TITLE = {Convergence and summability of {F}ourier transforms and
	{H}ardy spaces},
	SERIES = {Applied and Numerical Harmonic Analysis},
	PUBLISHER = {Birkh\"auser/Springer, Cham},
	YEAR = {2017},
	PAGES = {xxii+435},
}

@article {Sawyer_Wang,
	AUTHOR = {Sawyer, Eric and Wang, Zipeng},
	TITLE = {The {$\theta $}-bump theorem for product fractional integrals},
	JOURNAL = {Studia Math.},
	FJOURNAL = {Studia Mathematica},
	VOLUME = {253},
	YEAR = {2020},
	NUMBER = {2},
	PAGES = {109--127},
}

@article {Sawyer_Wang_2,
	AUTHOR = {Sawyer, Eric and Wang, Zipeng},
	TITLE = {The product {S}tein-{W}eiss theorem},
	JOURNAL = {Studia Math.},
	FJOURNAL = {Studia Mathematica},
	VOLUME = {256},
	YEAR = {2021},
	NUMBER = {3},
	PAGES = {259--309},
}
\end{document}